\newtheorem{theorem}{Theorem}
\newtheorem{lemma}[theorem]{Lemma}
\theoremstyle{remark}
\newtheorem*{remarks}{Remarks}
\numberwithin{theorem}{section} \numberwithin{equation}{section}
\setlist[enumerate]{leftmargin=*,label=\rm{(\arabic*)}}
\setlist[itemize]{leftmargin=*}
\newcommand{\CE}{\mathcal{E}}
\newcommand{\CI}{\mathcal{I}}
\newcommand{\CJ}{\mathcal{J}}
\newcommand{\CK}{\mathcal{K}}
\newcommand{\CS}{\mathcal{S}}
\renewcommand{\b}{\beta}
\newcommand{\e}{\varepsilon}
\renewcommand{\k}{\kappa}
\renewcommand{\t}{\tau}
\newcommand{\w}{\omega}
\newcommand{\Ga}{\Gamma}
\newcommand{\ssum}{{\textstyle{\displaystyle\sum}}}
\newcommand{\flo}[1]{\lfloor #1\rfloor}
\newcommand{\R}{\mathbb{R}}
\newcommand{\C}{\mathbb{C}}
\newcommand{\K}{\mathbb{K}}
\newcommand{\Z}{\mathbb{Z}}
\newcommand{\N}{\mathbb{N}}
\renewcommand{\a}{\alpha}
\newcommand{\vth}{\vartheta}
\newcommand{\re}{\textnormal{Re}}
\renewcommand{\pmod}[1]{\ \, \left( \mathrm{mod} \, #1 \right)}
\newcommand{\Pmod}[1]{\ \, ( \mathrm{mod} \, #1 )}
\title{A Rademacher-type exact formula for partitions without sequences}
\author{Walter Bridges}
\author{Kathrin Bringmann}
\address{University of Cologne, Department of Mathematics and Computer Science, Weyertal 86-90, 50931 Cologne, Germany }
\email{wbridges@uni-koeln.de}
\email{kbringma@math.uni-koeln.de}
\subjclass[2020]{11B57, 11F03, 11F20, 11F30, 11F37, 11P82}
\keywords{Circle Method, exact formulas, mock modular forms, mock theta functions, partitions without sequences}
\begin{document}
\maketitle

\begin{abstract}
	In this paper we prove an exact formula for the number of partitions without sequences. By work of Andrews, the corresponding generating function is a product of a modular form and a mock theta function, giving an overall weight of 0. The proof requires evaluating and bounding Kloosterman sums and the Circle Method.
\end{abstract}

\section{Introduction and statement of results}

A {\it partition} of a non-negative integer $n$ is a decomposition into the sum of non-increasing non-negative integers. Denote the number of partitions of $n$ by $p(n)$. The corresponding generating function is
\begin{equation}\label{E:pQop}
	P(q):=\sum_{n=0}^\infty p(n)q^n = \prod_{n=0}^\infty \frac{1}{1-q^n} = \frac{1}{(q;q)_\infty},
\end{equation}
where, for $a\in\C$ and $n\in\N_0\cup\{\infty\}$, we define $(a)_n=(a;q)_n:=\prod_{j=0}^{n-1}(1-aq^j)$.

An important question in the theory of partitions is to determine exact formulas or asymptotics for functions such as $p(n)$. Note that the generating function in \eqref{E:pQop} is (essentially) a modular form. A Tauberian Theorem \cite{HR} shows that the following asymptotic holds
\begin{equation}\label{E:partas}
	p(n) \sim \frac{1}{4n\sqrt3}e^{\pi\sqrt\frac{2n}{3}} \qquad (n\to\infty).
\end{equation}

Building on Hardy and Ramanujan's earlier developments, Rademacher and Zuckerman later proved exact asymptotic series expansions for functions like $p(n)$ using the Circle Method \cite{RadZ}. To state this, define the {\it Kloostermann sums}\footnote{Note that in particular $\w_{h,k}$ only depends on $h\Pmod k$.}, with $\w_{h,k}$ given in \eqref{E:omega},
\[
	A_k(n) := \sum_{h\Pmod k^*} \w_{h,k}e^{-\frac{2\pi inh}{k}}.
\]
The $*$ indicates that $h$ only runs over those elements$\Pmod k$ that are coprime to $k$. Then
\begin{equation}\label{E:partexact}
	p(n) = \frac{2\pi}{(24n-1)^\frac34}\sum_{k\ge1} \frac{A_k(n)}{k}I_\frac32\left(\frac{\pi\sqrt{24n-1}}{6k}\right),
\end{equation}
where $I_\k$ denotes the Bessel function of order $\k$. Using that $I_\k(x)\sim\frac{e^x}{\sqrt{2\pi x}}$ (as $x\to\infty$), one recovers \eqref{E:partas}.

Another important example in the study of coefficients of hypergeometric series and automorphic forms is Ramanujan's third order mock theta function
\begin{equation}\label{fq}
	f(q) = \sum_{n=0}^\infty \a(n)q^n := 1 + \sum_{n=1}^\infty \frac{q^{n^2}}{(-q;q)_n^2}.
\end{equation}
The mock theta functions were introduced by Ramanujan in his last letter to Hardy \cite{Ram}. By work of Zwegers \cite{Z}, $f(q)$ is ``the holomorphic part of a harmonic Maass form''. This in particular implies that $f$ is not quite modular, but its modular transformations involve ``Mordell-type integrals''. These integrals were absorbed into the error terms of the asymptotic series expansion for $\a(n)$ obtained by Dragonette \cite{D} and Andrews \cite{A66}. They then conjectured an exact formula for $\a(n)$. Work of the second author and Ono used so-called Maass--Poincar\'e series to solve this conjecture \cite{BO}. The exact formula closely resembles \eqref{E:partexact} meaning that the non-modularity of $f(q)$ does not influence the shape of the exact formula.

The situation drastically changes if one multiplies mock theta functions with modular forms. These functions arise in many settings, for example in the study of probability, mathematical physics, and partition theory. One particular example gives partitions that do not contain any consecutive integers as parts. Such partitions were first explored by MacMahon \cite{Mac} and arise in connection with certain probability models as well as in the study of threshold growth in cellular automata \cite{HLR} (also see \cite{AEPR}). Let $p_2(n)$ denote the number of such partitions of size $n$. Andrews \cite{A05} proved that we have the generating function
\begin{equation*}
	G_2(q) := \sum_{n=0}^\infty p_2(n)q^n = \frac{\left(-q^3;q^3\right)_\infty}{\left(q^2;q^2\right)_\infty}\chi(q),
\end{equation*}
with the third order mock theta function
\[
	\chi(q) := \sum_{n=0}^\infty \frac{(-q;q)_nq^{n^2}}{\left(-q^3;q^3\right)_n}.
\]
The combinatorics of the generating function was further explored in \cite{BLM}. From work of Holroyd, Liggett, and Romik \cite{HLR} it follows that
\begin{equation*}
	\frac1ne^{\frac{2\pi}{3}\sqrt n} \ll p_2(n) \ll \frac{1}{\sqrt n}e^{\frac{2\pi}{3}\sqrt n}.
\end{equation*}
The second author and Mahlburg \cite{BM} strongly improved upon this to find an asymptotic series for $p_2(n)$ with an error term of size $\log(n)$. In this paper we strengthen this result and prove an exact formula. For this, we require some notation. For $b\in\R$, $k\in\N$, and $\nu\in\Z$ define
\[
	\CI_{b,k,\nu}(n) := \int_{-1}^1 \frac{\sqrt{1-x^2}I_1\left(\frac{2\pi}{k}\sqrt{2bn\left(1-x^2\right)}\right)}{\cosh\left(\frac{\pi i}{k}\left(\nu-\frac16\right)-\frac\pi k\sqrt\frac b3x\right)} dx.
\]
Moreover, let the Kloosterman sums $K_k^{[4]}$, $K_k^{[6]}$, $K_k^{[8]}$, and $\CK_k$ be given in \eqref{E:K4}, \eqref{E:K6}, \eqref{E:K8}, and \eqref{E:K}, respectively.\footnote{Note that these Kloosterman sums may also be written in terms of classical Kloosterman sums -- see \Cref{S:KSums} for details.}

\begin{theorem}\label{T:p2}
	We have, for $n\in\N$,
	\begin{multline*}
		p_2(n) = \frac{5\pi}{36\sqrt{6n}}\sum_{\substack{k\ge1\\\gcd(k,6)=2}} \frac{1}{k^2}\sum_{\nu\Pmod k} (-1)^\nu K_k^{[4]}(\nu;n)\CI_{\frac{5}{36},k,\nu}(n)\\
		+ \frac{\pi}{6\sqrt{6n}}\sum_{\substack{k\ge1\\\gcd(k,6)=3}} \frac{1}{k^2}\sum_{\nu\Pmod k} (-1)^\nu K_k^{[6]}(\nu;n)\CI_{\frac16,k,\nu}(n) + \frac{\pi}{6\sqrt n}\sum_{\substack{k\ge1\\\gcd(k,6)=1}} \frac{\CK_k(n)}{k^2}I_1\left(\frac{2\pi\sqrt n}{3k}\right)\\
		+ \frac{\pi}{18\sqrt{6n}}\sum_{\substack{k\ge1\\\gcd(k,6)=1}} \frac{1}{k^2}\sum_{\nu\Pmod k} (-1)^\nu K_k^{[8]}(\nu;n)\CI_{\frac{1}{18},k,\nu}(n).
	\end{multline*}
\end{theorem}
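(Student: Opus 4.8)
The plan is to run the Hardy--Ramanujan--Rademacher Circle Method on the generating function $G_2(q)$, the new feature being that $G_2$ has weight $0$ and is only \emph{mock} modular, so that its transformation law at each cusp carries a Mordell-type integral which must be tracked exactly rather than swept into an error term. Writing $q=e^{2\pi i\tau}$ with $\tau\in\H$ and applying Cauchy's theorem, I would begin from
\[
p_2(n)=\frac{1}{2\pi i}\int_{C}\frac{G_2(q)}{q^{n+1}}\,dq,
\]
taken over a circle of radius $e^{-2\pi/N^2}$ with $N\to\infty$, and dissect $C$ into Farey arcs centred at the rationals $h/k$ with $k\le N$. On the arc around $h/k$ I set $\tau=\frac{h}{k}+\frac{iz}{k}$ with $\re(z)>0$ and replace $G_2$ by its behaviour as $z\to0$, i.e.\ by its transformation at the cusp $h/k$.

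The central input is therefore an exact transformation formula for $G_2$ at every cusp. Using Andrews' factorisation $G_2=\frac{(-q^3;q^3)_\infty}{(q^2;q^2)_\infty}\chi(q)$, I would treat the prefactor and the third order mock theta function $\chi$ separately. The prefactor is an eta-quotient, a genuine weight $0$ modular form whose transformation is governed by the classical $(q;q)_\infty$ law (hence the $\omega$-type roots of unity); crucially, its leading singular order at $h/k$ depends on how $2$ and $3$ divide $k$, and this is exactly what splits the final answer into the cases $\gcd(k,6)\in\{1,2,3\}$ and fixes the constants $b=\tfrac{5}{36},\tfrac16,\tfrac{1}{18}$ appearing in the Bessel arguments (with $b=\tfrac1{18}$ at the dominant cusp $k=1$ reproducing the known main term $e^{\frac{2\pi}{3}\sqrt n}$). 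For $\chi$ I would use its completion to a harmonic Maass form following Zwegers, so that its transform is a modular piece plus a Mordell-type integral $\int_{\R}\frac{e^{\pi i\tau x^2}}{\cosh(\pi x)}\,dx$; the $\cosh$ in the denominator of $\CI_{b,k,\nu}$ is precisely the trace of this integral.

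With the transformation law in hand, I would complete the Farey arcs to full circles following Rademacher. Summing the multiplier and root-of-unity factors times $e^{-2\pi i nh/k}$ over the $h$ modulo $k$ coprime to $k$ (and, for the mock piece, over an auxiliary residue $\nu$ modulo $k$) collects exactly the Kloosterman sums $K_k^{[4]},K_k^{[6]},K_k^{[8]}$ and $\CK_k$. The remaining $z$-integral over each completed circle is, for the genuinely modular principal parts, a Hankel-type integral evaluating to a clean Bessel function $I_1$ (this is the source of the bare $\CK_k$--$I_1$ term in the $\gcd(k,6)=1$ sum), while for the Mordell-integral parts a change of variables such as $x=\cos\theta$ collapses it to the stated $\CI_{b,k,\nu}(n)$ with cosh-argument $\frac{\pi i}{k}\left(\nu-\frac16\right)-\frac{\pi}{k}\sqrt{\frac{b}{3}}\,x$. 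Assembling the normalising constants from each cusp class yields the four displayed sums.

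The hard part will be twofold. First, one must prove the infinite series over $k$ actually converges, which forces nontrivial \emph{bounds} on $K_k^{[\bullet]}$ and $\CK_k$ (square-root cancellation of Weil type, combined with the small-argument decay $I_1(x/k)\ll 1/k$ and a uniform estimate on $\CI_{b,k,\nu}$) so that the summands decay summably in $k$. Second --- and this is what upgrades the Bringmann--Mahlburg asymptotic to an exact identity --- one must show that the non-holomorphic Eichler integral attached to $\chi$, fed through the arcs, contributes \emph{exactly} the $\CI_{b,k,\nu}$ terms with no leftover: the minor-arc contributions and the tails of the completion must genuinely vanish as $N\to\infty$. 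Matching the Mordell integral's parameters to the cusp data so that the normalisation, the factor $(-1)^\nu$, and the cosh-argument all come out correctly is where most of the bookkeeping lies, and is the step I expect to be the principal obstacle.
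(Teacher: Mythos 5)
Your overall skeleton --- Farey dissection, exact tracking of the Mordell-type integrals coming from the mock piece, Kloosterman-sum bounds to force convergence, and the collapse of the principal-part integrals to $I_1$-Bessel functions and to $\CI_{b,k,\nu}(n)$ --- is the right one and matches the paper's. But your very first step diverges from the paper in a way that creates a real gap. You propose to work with the multiplicative form $G_2=\frac{(-q^3;q^3)_\infty}{(q^2;q^2)_\infty}\chi(q)$ and to transform $\chi$ via its Zwegers completion. The paper instead starts from Andrews' \emph{additive} identity \eqref{E:G2f}, $G_2=g_1+g_2$, where $g_2$ is an honest eta-quotient and $g_1$ is an eta-quotient times Ramanujan's $f(q)$ (not $\chi$), whose exact transformation at every cusp --- multiplier $\w_{h,k}$-quotients, a finite sum over $\nu\Pmod k$, and the explicit Mordell integral $I_{k,\nu}(z)$ --- is classical and was already set up in Bringmann--Mahlburg. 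This additive splitting is what makes the bookkeeping close up: the bare $\CK_k$--$I_1$ term in the theorem is \emph{not} the ``modular part of the completion of $\chi$'' fed through the arcs; it is the Rademacher--Zuckerman exact formula applied to the separate summand $g_2$, and moreover the $\gcd(k,6)=2$ contribution of $g_2$ cancels exactly against the non-Mordell part $\CS_{21}$ of the $g_1$ analysis. In your framework there is no separate modular summand, so you would have to extract a Mordell-integral-free piece from the transformed product at each cusp and prove the analogous cancellations; you have not indicated how, and the needed exact transformation formulas for $\chi$ at all cusps (with the correct $\nu$-sums and $\cosh$-kernels) are not off-the-shelf in the way they are for $f$ and $\w(q)$.

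Two further points you should not underestimate. First, ``square-root cancellation of Weil type'' is not what is used or needed: the paper gets by with Rademacher's $O_\e(k^{2/3+\e}\gcd(|n|,k)^{1/3})$ bound, but the essential technical device is that this bound must hold for the \emph{incomplete} Kloosterman sums $\K_{k,\ell}$ arising from the splitting of each Farey arc according to $N<k+k_1\le\ell$ (equations \eqref{E:spit} and \eqref{E:split2}); without this refinement the error terms do not tend to $0$ as $N\to\infty$. Second, passing from the Mordell integral $I_{k,\nu}(z)$ to the finite integral $\CI_{b,k,\nu}(n)$ is not a change of variables $x=\cos\theta$ on the $z$-integral: it is the principal-part truncation $\CJ^*_{b,k,\nu}$ of \Cref{L:Jb}, whose error term must be shown to vanish in the limit, after which the inner $w$-integral is completed to a Hankel contour and evaluated termwise as $I_1$. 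These are the steps where your proposal is currently only a heading rather than an argument.
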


\begin{remarks}\leavevmode\newline
	(1) \Cref{T:p2} is the first case of an exact formula for a mixed mock modular form of weight $0$. In \cite{BMa} the much simpler case of a mixed modular form of weight $-\frac32$ was considered. It would be interesting to develop methods to prove exact formulas in the case of weight $\frac{1}{2}$, for example Rhoades proved an asymptotic series for the number of strongly unimodal sequences of size $n$ \cite{Rhoades}. This is likely a difficult problem, as unlike for $f(q)$ there is no theory of Poincar\'{e} series for mixed mock modular forms.\\
	(2) As in \cite{BM}, one can use \Cref{T:p2} to determine the first term in the asymptotic expansion of $p_2(n)$.\\
	(3) The unrestricted partition function is log-concave (see \cite{DP}), and we conjecture that $p_2(n)^2-p_2(n+1)p_2(n-1) \geq 0$ for $n \geq 482$ and all even $n\geq 2$.
\end{remarks}

The paper is organized as follows. In \Cref{S:pre} we recall some basic facts on multipliers, transformation laws, and bounds for Mordell-type integrals. \Cref{S:KSums} is devoted to Kloosterman sums (rewriting and bounding). In \Cref{S:CM} we apply the Circle Method to prove \Cref{T:p2}.

\section*{Acknowledgements}
The authors thank Lukas Mauth for providing helpful feedback on an earlier draft. This project has received funding from the European Research Council (ERC) under the European Union's Horizon 2020 research and innovation programme (grant agreement No. 101001179).

\section{Preliminaries}\label{S:pre}

\subsection{Modularity of the partition function}

We require the modularity of the partition function $P(q)$. We state these in terms of the Circle Method notation. Let $q:=e^{\frac{2\pi i}{k}(h+iz)}$, $q_1:=e^{\frac{2\pi i}{k}(h'+\frac iz)}$ with $z\in\C$ with $\re(z)>0$, $h,k\in\N$,\footnote{We often impose extra conditions on $h'$; whenever we do so, we mention these.} $hh'\equiv-1\Pmod k$. Moreover let $\w_{h,k}$ be defined through
\[
	P(q) = \w_{h,k}z^\frac12e^{\frac{\pi}{12k}\left(z^{-1}-z\right)}P(q_1).
\]
Then we have (see \cite[equation (5.2.4)]{A80})
\begin{equation}\label{E:omega}
	\w_{h,k} :=
	\begin{cases}
		\left(\frac{-k}{h}\right)e^{-\pi i\left(\frac14(2-hk-h)+\frac{1}{12}\left(k-\frac1k\right)\left(2h-h'+h^2h'\right)\right)} & \text{if }h\text{ is odd},\\
		\left(\frac{-h}{k}\right)e^{-\pi i\left(\frac14(k-1)+\frac{1}{12}\left(k-\frac1k\right)\left(2h-h'+h^2h'\right)\right)} & \text{if }k\text{ is odd}.
	\end{cases}
\end{equation}
Here $(\frac\cdot\cdot)$ denotes the Kronecker symbol.

\subsection{A splitting of $G_2(q)$}

Employing a mock theta-function identity of Ramanujan, Andrews \cite{A05} showed the following decomposition
\begin{equation}\label{E:G2f}
	G_2(q) = \frac{\left(q^6;q^6\right)_\infty}{4\left(q^2;q^2\right)_\infty\left(q^3;q^3\right)_\infty}f(q) + \frac{3\left(q^3;q^3\right)^3_\infty}{4(q;q)_\infty\left(q^2;q^2\right)_\infty\left(q^6;q^6\right)_\infty},
\end{equation}
where $f(q)$ is defined in \eqref{fq}. We denote the two terms on the right-side of equation \eqref{E:G2f} by
\[
	g_1(q) := \frac{\left(q^6;q^6\right)_\infty}{4\left(q^2;q^2\right)_\infty\left(q^3;q^3\right)_\infty}f(q) =: \sum_{n=0}^\infty a(n)q^n,\qquad g_2(q) := \frac{3\left(q^3;q^3\right)^3_\infty}{4(q;q)_\infty\left(q^2;q^2\right)_\infty\left(q^6;q^6\right)_\infty}.
\]
Moreover we require
\[
	\xi(q) := \frac{\left(-q^3;q^3\right)_\infty}{\left(q^2;q^2\right)_\infty} =: \sum_{n=0}^\infty r(n)q^n.
\]

\subsection{Mordell integrals}

Throughout, we let $0\le h<k\le N$ with $\gcd(h,k)=1$, and $z=k(N^{-2}-i\Phi)$ with $-\vth_{h,k}'\le\Phi\le\vth_{h,k}''$. Here
\[
	\vth_{h,k}' := \frac{1}{k(k_1+k)},\qquad \vth_{h,k}'' := \frac{1}{k(k_2+k)},
\]
where $\frac{h_1}{k_1}<\frac hk<\frac{h_2}{k_2}$ are adjacent Farey fractions in the Farey sequence of order $N\in\N$. Below, we let $N\to\infty$.

The following Mordell-type integral occurs in the modular transformation laws of the mock theta function $f(q)$
\begin{equation*}
	I_{k,\nu}(z) := \int_\R \frac{e^{-\frac{3\pi zx^2}{k}}}{\cosh\left(\frac{\pi i}{k}\left(\nu-\frac16\right)-\frac{\pi zx}{k}\right)} dx.
\end{equation*}
In Lemma 3.1 of \cite{BM} these integrals were approximated. To state this result, let $\CJ_{b,k,\nu}(z):=ze^\frac{\pi b}{kz}I_{k,\nu}(z)$, and define the principal part truncation of $\CJ_{b,k,\nu}$ as
\[
	\CJ_{b,k,\nu}^*(z) := \sqrt\frac b3\int_{-1}^1 \frac{e^{\frac{\pi b}{kz}\left(1-x^2\right)}}{\cosh\left(\frac{\pi i}{k}\left(\nu-\frac16\right)-\frac\pi k\sqrt\frac b3x\right)} dx.
\]

\begin{lemma}\label{L:Jb}
	If $b\in\R$ and $\nu\in\Z$ with $0<\nu\le k$, then we have the following, as $z\to0$:
	\begin{enumerate}
		\item\label{I:Jb1} If $b\le0$, then we have
		\[
			|\CJ_{b,k,\nu}(z)| \ll \frac{1}{\left|\frac\pi2-\frac\pi k\left(\nu-\frac16\right)\right|}.
		\]
		
		\item\label{I:Jb2} If $b>0$, then $\CJ_{b,k,\nu}(z)=\CJ_{b,k,\nu}^*(z)+\CE_{b,k,\nu}(z)$, where the error term satisfies for $0<\nu\le k$
		\[
			|\CE_{b,k,\nu}(z)| \ll \frac{1}{\left|\frac\pi2-\frac\pi k\left(\nu-\frac16\right)\right|}.
		\]
	\end{enumerate}
\end{lemma}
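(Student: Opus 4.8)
The plan is to handle both parts with a single device: a linear change of variables in the defining integral that makes the imaginary part of the argument of the hyperbolic cosine \emph{constant}, equal to $c:=\frac\pi k(\nu-\frac16)$, followed by a rotation of the contour back to $\R$. The point of a constant imaginary part is the clean uniform lower bound, valid for all real $u$,
\[
	\left|\cosh\left(ic-\tfrac\pi k\sqrt{\tfrac b3}\,u\right)\right| = \sqrt{\cos^2 c+\sinh^2\!\left(\tfrac\pi k\sqrt{\tfrac b3}\,u\right)} \ge |\cos c|,
\]
and likewise with $\sqrt{b/3}$ replaced by $1$. Since $0<\nu\le k$ forces $c\in(0,\pi)$, Jordan's inequality gives $|\cos c|=|\sin(\frac\pi2-c)|\ge\frac2\pi|\frac\pi2-c|$, hence $\frac1{|\cos c|}\ll\frac1{|\frac\pi2-\frac\pi k(\nu-\frac16)|}$. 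This already matches the shape of the claimed bounds, so the remaining task is to show the integrals themselves are $\ll\frac1{|\cos c|}$.

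For part \eqref{I:Jb2} I substitute $u=\frac{zx}{\sqrt{b/3}}$ in $I_{k,\nu}(z)$. Since $zx^2$ becomes $\frac b3\frac{u^2}{z}$ and $\frac{\pi zx}{k}$ becomes $\frac\pi k\sqrt{b/3}\,u$, the prefactor $e^{\pi b/(kz)}$ combines with the Gaussian into $e^{\frac{\pi b}{kz}(1-u^2)}$, and $\CJ_{b,k,\nu}(z)=\sqrt{\tfrac b3}\int_{L}\frac{e^{\frac{\pi b}{kz}(1-u^2)}}{\cosh(ic-\frac\pi k\sqrt{b/3}\,u)}\,du$, where $L=\frac{z}{|z|}\R$ is the rotated line. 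The poles of the integrand lie on the imaginary axis (the $\cosh$ vanishes only when its argument equals $\frac{\pi i}2(2m+1)$, which forces $u\in i\R$), and $\re(z)>0$ gives $|\arg z|<\frac\pi2$, so rotating $L$ back to $\R$ crosses no poles, while the two arcs at infinity vanish because $\re(\frac{\pi b}{kz}(1-u^2))\sim-\frac{\pi bR^2}{k|z|}\cos(2\phi-\arg z)<0$ throughout the swept sectors. After rotation the integrand is exactly that of $\CJ_{b,k,\nu}^*(z)$, so $\CE_{b,k,\nu}=\CJ_{b,k,\nu}-\CJ_{b,k,\nu}^*$ is the tail over $|u|>1$. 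There $1-u^2<0$ and $\re(\frac{\pi b}{kz})=\frac{\pi b}{k}\re(1/z)>0$, so the integrand is $\le\frac1{|\cos c|}e^{-\frac{\pi b}{k}\re(1/z)(u^2-1)}$; bounding $u^2-1\ge2(|u|-1)$ and integrating gives $|\CE_{b,k,\nu}(z)|\ll\frac1{|\cos c|}\cdot\frac{k}{\re(1/z)}$, with the implied constant depending on $b$.

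For part \eqref{I:Jb1} note that $b\le0$ and $\re(1/z)>0$ give $|e^{\pi b/(kz)}|\le1$, so it suffices to bound $|z\,I_{k,\nu}(z)|$. Substituting $s=zx$ (a real rescaling of $z$) turns the Gaussian into $e^{-3\pi s^2/(kz)}$ and again makes the imaginary part of the $\cosh$-argument equal to $c$; rotating the contour from $z\R$ back to $\R$ exactly as above yields $z\,I_{k,\nu}(z)=\int_\R\frac{e^{-3\pi s^2/(kz)}}{\cosh(ic-\frac\pi k s)}\,ds$. Using $|\cosh(ic-\frac\pi k s)|\ge|\cos c|$ together with $|e^{-3\pi s^2/(kz)}|=e^{-\frac{3\pi}{k}\re(1/z)s^2}$ and evaluating the resulting genuine Gaussian integral produces $|z\,I_{k,\nu}(z)|\ll\frac1{|\cos c|}\sqrt{\frac{k}{\re(1/z)}}$.

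Both estimates thus reduce to controlling $\frac{k}{\re(1/z)}=\frac{k|z|^2}{\re(z)}$. With $z=k(N^{-2}-i\Phi)$ and $-\vth_{h,k}'\le\Phi\le\vth_{h,k}''$ this equals $k^2(N^{-2}+N^2\Phi^2)$; since $k\le N$ and the Farey neighbor relations give $k_i+k>N$, hence $|\Phi|<\frac1{kN}$, one obtains $\frac{k}{\re(1/z)}<2$ uniformly. Feeding this into the two displays above (with $b$ fixed) gives $|\CE_{b,k,\nu}(z)|\ll\frac1{|\cos c|}$ and $|z\,I_{k,\nu}(z)|\ll\frac1{|\cos c|}$, and the Jordan estimate from the first paragraph converts these into the asserted bounds. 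I expect the main obstacle to be the bookkeeping in the contour rotation, namely verifying rigorously that no pole is crossed and that the two arcs at infinity contribute nothing, uniformly as $\arg z\to\pm\frac\pi2$ (equivalently, along the thinnest Farey arcs as $z\to0$), rather than the elementary estimates that follow once the rotation is in place.
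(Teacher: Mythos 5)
Your proof is correct, and it is essentially the argument behind this lemma: the paper does not prove it here but imports it from Lemma 3.1 of \cite{BM}, where the same steps appear — normalize the integrand by a linear substitution, rotate the contour $z\R$ back to $\R$ (legitimate since the zeros of the $\cosh$ lie on the imaginary axis and the Gaussian factor kills the arcs), and bound the denominator below via $|\cosh(it-s)|\ge|\cos t|\gg\left|\frac\pi2-\frac\pi k\left(\nu-\frac16\right)\right|$, with the Farey conditions giving $\re\left(\frac1z\right)\ge\frac k2$ so that the remaining Gaussian/tail integrals are $O(1)$.
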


\section{Kloosterman sums}\label{S:KSums}

\subsection{Rademacher's Kloosterman sums}

Recall that $k_1=k_1(h,k)$ is the denominator of the fraction preceeding $\frac hk$ in the Farey sequence of order $N\in\N$. By \cite{Rad} we have the following bounds for Kloosterman sums.

\begin{lemma}\label{L:Klooster}
	We have, for $k\in\N$, $n,m,\ell\in\Z$, $n\ne0$ with $N+1\le\ell\le N+k+1$, for $\e>0$,
	\begin{align*}
		K_k(n,m) := \sum_{h\Pmod k^*} e^{-\frac{2\pi i}{k}\left(nh-mh'\right)} &= O_\e\left(k^{\frac23+\e}\gcd(|n|,k)^\frac13\right),\\ 
		\K_{k,\ell}(n,m) := \sum_{\substack{h\Pmod k^*\\N<k+k_1\le\ell}} e^{-\frac{2\pi i}{k}\left(nh-mh'\right)} &= O_\e\left(k^{\frac23+\e}\gcd(|n|,k)^\frac13\right).
	\end{align*}
\end{lemma}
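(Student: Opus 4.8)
The plan is to derive the first estimate from the classical Weil bound together with an elementary trade of exponents, and then to bootstrap to the restricted sum by completing the exponential sum against the Farey constraint. For the full sum $K_k(n,m)$, I would invoke the twisted multiplicativity of Kloosterman sums to reduce to the prime-power pieces $K_{p^a}(n,m)$ (Weil's bound for $a=1$, the explicit $p$-adic evaluation for $a\ge2$), which assemble to $|K_k(n,m)|\ll_\e k^{\frac12+\e}\gcd(n,m,k)^\frac12$. The stated shape, which is crucially \emph{uniform in $m$}, then follows by discarding the second argument via $\gcd(n,m,k)\le\gcd(|n|,k)$ and trading exponents through $\gcd(|n|,k)^\frac12=\gcd(|n|,k)^\frac13\gcd(|n|,k)^\frac16\le\gcd(|n|,k)^\frac13k^\frac16$, giving $|K_k(n,m)|\ll_\e k^{\frac23+\e}\gcd(|n|,k)^\frac13$.

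For the restricted sum $\K_{k,\ell}(n,m)$ the decisive preliminary is to convert the constraint $N<k+k_1\le\ell$ into a congruence–interval condition on $h'$. Since $\frac{h_1}{k_1}<\frac hk$ are adjacent in the Farey sequence of order $N$, we have $hk_1-h_1k=1$, whence $k_1\equiv h^{-1}\equiv -h'\pmod k$ (using $hh'\equiv-1\pmod k$); moreover $k_1$ is the unique representative of this residue class in the window $(N-k,N]$. Consequently $N<k+k_1\le\ell$ is equivalent to $N-k<k_1\le\ell-k$, i.e.\ to requiring that $-h'$ lift into the subinterval $(N-k,\ell-k]$, an arc $I$ of residues modulo $k$ of length $\ell-N\le k+1$. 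Thus $\K_{k,\ell}(n,m)$ is an \emph{incomplete} Kloosterman sum, with $h'$ restricted to $I$.

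I would then complete the sum. Expanding the indicator in additive characters modulo $k$ gives $\mathbf 1[-h'\in I]=\frac1k\sum_{t\Pmod k}\widehat{\mathbf 1}_I(t)\,e^{\frac{2\pi i}{k}th'}$ with a Dirichlet-type kernel satisfying $\sum_{t\Pmod k}|\widehat{\mathbf 1}_I(t)|\ll k\log k$. Inserting this, the phase $e^{\frac{2\pi i}{k}th'}$ merges with $e^{\frac{2\pi i}{k}mh'}$, so that
\[
	\K_{k,\ell}(n,m)=\frac1k\sum_{t\Pmod k}\widehat{\mathbf 1}_I(t)\,K_k(n,m+t).
\]
Applying the first estimate uniformly in the second argument, $|K_k(n,m+t)|\ll_\e k^{\frac23+\e}\gcd(|n|,k)^\frac13$, and pulling this out, the surviving factor $\frac1k\sum_{t}|\widehat{\mathbf 1}_I(t)|\ll\log k$ is absorbed into $k^\e$, yielding the claimed bound; this is exactly the argument of \cite{Rad}. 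The heart of the matter is the second paragraph together with this last point: the identification of the Farey constraint with an interval for $-h'\pmod k$, and the recognition that the crude, $m$-uniform exponent $\frac23$ is precisely what makes completion cost only $k^\e$. The sharper Weil exponent $\frac12$ carries a genuine $\gcd(n,m+t,k)^\frac12$ dependence that would not survive the summation over $t$ as cleanly, so trading it away in the first step is what makes the whole bootstrap lossless.
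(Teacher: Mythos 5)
Your proof is correct, but note that the paper itself offers no argument for this lemma: it simply quotes the bounds from Rademacher's 1938 paper on the Fourier coefficients of $J(\tau)$, so the real comparison is with Rademacher's original proof. Your second paragraph --- the identification $k_1\equiv h^{-1}\equiv -h'\pmod{k}$ via $hk_1-h_1k=1$, the observation that $k_1$ is the unique lift of this residue class to the window $(N-k,N]$, and hence that the Farey constraint $N<k+k_1\le\ell$ is an interval condition on $-h'\bmod k$ --- is exactly Rademacher's key step, and your completion against the Dirichlet kernel with $\sum_t\bigl|\widehat{\mathbf 1}_I(t)\bigr|\ll k\log k$ is the standard way to finish. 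Where you genuinely diverge is the input for the complete sum: Rademacher, writing pre-Weil, had the exponent $\frac23$ directly from the elementary estimate of Kloosterman and Sali\'e, whereas you obtain it by degrading Weil's bound $k^{\frac12+\e}\gcd(n,m,k)^{\frac12}$ through the trade $\gcd(|n|,k)^{\frac12}\le \gcd(|n|,k)^{\frac13}k^{\frac16}$; both are valid, and yours is the natural modern route. One claim in your closing sentence is inaccurate, though it does not affect the proof: the Weil exponent survives the completion perfectly well, since $\gcd(n,m+t,k)\le\gcd(|n|,k)$ uniformly in $t$, so carrying Weil through unweakened would in fact give the stronger bound $k^{\frac12+\e}\gcd(|n|,k)^{\frac12}$ for both $K_k(n,m)$ and $\K_{k,\ell}(n,m)$; the passage to the exponent $\frac23$ is a cosmetic weakening (all that the later sections require), not the reason the bootstrap is lossless.
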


We now investigate certain Kloosterman sums that occur when using the Circle Method for $g_1(q)$. For this, we distinguish cases based on $\gcd(k,6)$.

\subsection{$6\mid k$}

Define the following Kloosterman sums ($n,m\in\N_0$)
\begin{align*}
	K_k^{[1]}(n,m) &:= \sum_{\substack{0\le h<k\\\gcd(h,k)=1}} \frac{\w_{h,k}\w_{h,\frac k2}\w_{h,\frac k3}}{\w_{h,\frac k6}}e^{\frac{\pi i}{2}\left(1-\frac{3k}{2}\right)h'}e^{\frac{2\pi i}{k}\left(-nh+mh'\right)},\\
	K_k^{[2]}(\nu;n,m) &:= \sum_{\substack{0\le h<k\\\gcd(h,k)=1}} \frac{\w_{h,k}\w_{h,\frac k2}\w_{h,\frac k3}}{\w_{h,\frac k6}}e^{\frac{\pi i}{k}\left(-3\nu^2+\nu\right)h'}e^{\frac{2\pi i}{k}\left(-nh+mh'\right)}.
\end{align*}
Note that these Kloosterman sums only depend on the residue class of $h,h'\Pmod k$. We also need the incomplete versions, $N+1\le\ell\le N+k-1$,
\begin{align*}
	\K_{k,\ell}^{[1]}(n,m) &:= \sum_{\substack{h\Pmod k^*\\N<k+k_1\le\ell}} \frac{\w_{h,k}\w_{h,\frac k2}\w_{h,\frac k3}}{\w_{h,\frac k6}}e^{\frac{\pi i}{2}\left(1-\frac{3k}{2}\right)h'}e^{\frac{2\pi i}{k}\left(-nh+mh'\right)},\\
	\K_{k,\ell}^{[2]}(\nu;n,m) &:= \sum_{\substack{h\Pmod k^*\\N<k+k_1\le\ell}} \frac{\w_{h,k}\w_{h,\frac k2}\w_{h,\frac k3}}{\w_{h,\frac k6}}e^{\frac{\pi i}{k}\left(-3\nu^2+\nu\right)h'}e^{\frac{2\pi i}{k}\left(-nh+mh'\right)}.
\end{align*}

The following lemma rewrites the multiplier.

\begin{lemma}\label{L:mult1}
	We have
	\[
		\frac{\w_{h,k}\w_{h,\frac k2}\w_{h,\frac k3}}{\w_{h,\frac k6}} = -e^{\pi i\frac{5k+18}{36}h}.
	\]
\end{lemma}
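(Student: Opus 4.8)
The plan is to substitute the explicit formula \eqref{E:omega} into each of the four multipliers and simplify directly. Since $6\mid k$ and $\gcd(h,k)=1$, the integer $h$ is odd and coprime to $6$, so the first (``$h$ odd'') case of \eqref{E:omega} applies simultaneously to $\w_{h,k}$, $\w_{h,\frac k2}$, $\w_{h,\frac k3}$, and $\w_{h,\frac k6}$, regardless of the parities of $\frac k2$ and $\frac k6$. A preliminary point deserves care: each multiplier a priori carries its own auxiliary integer $h'$, defined modulo the respective denominator. First I would argue that one common $h'$ with $hh'\equiv-1\Pmod k$ may be used throughout, since it also satisfies $hh'\equiv-1\pmod{K}$ for every $K\in\{k,\frac k2,\frac k3,\frac k6\}$, and the $h'$-dependent part of \eqref{E:omega} is invariant under $h'\mapsto h'+K$: the induced change in the exponent is $-\frac{\pi i}{12}(K^2-1)(h^2-1)\in 2\pi i\Z$, because $24\mid h^2-1$ (as $h$ is odd and coprime to $3$).

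With this common $h'$ fixed, I would split the product into its Kronecker-symbol part and its exponential part. For the symbols, multiplicativity in the upper argument gives
\[
\frac{\leg{-k}{h}\leg{-k/2}{h}\leg{-k/3}{h}}{\leg{-k/6}{h}} = \leg{k^4/36}{h} = \leg{36\,(k/6)^4}{h} = 1,
\]
since the argument is a perfect square coprime to $h$. For the exponential part I would sum $\frac14(2-hK-h)+\frac1{12}\left(K-\frac1K\right)\left(2h-h'+h^2h'\right)$ with signs $(+,+,+,-)$ over $K=k,\frac k2,\frac k3,\frac k6$. The decisive feature is that the $\frac1K$-terms cancel, $\frac1k(-1-2-3+6)=0$, while $\sum\pm K=\frac53k$ and $\sum\pm\frac1K=0$. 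Collecting the surviving terms, the total exponent $S$ (defined by the product equalling $e^{-\pi i S}$) becomes
\[
S = 1 - \frac h2 - \frac{5hk}{36} + \frac{5k\left(h^2-1\right)h'}{36}.
\]

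It then remains to match $e^{-\pi i S}$ against the claimed value $-e^{\pi i\frac{5k+18}{36}h}=e^{\pi i\left(1+\frac h2+\frac{5kh}{36}\right)}$. Comparing exponents modulo $2$, their difference is $-2-\frac{5k(h^2-1)h'}{36}$, so the identity reduces to showing that $\frac{5k(h^2-1)h'}{36}$ is an even integer. Writing $k=6\cdot\frac k6$ and $h^2-1=24\cdot\frac{h^2-1}{24}$, one has $\frac{5k(h^2-1)h'}{36}=20\cdot\frac k6\cdot\frac{h^2-1}{24}\cdot h'\in 2\Z$, which finishes the proof. The only genuinely delicate step is the bookkeeping of the auxiliary integers $h'$: one must justify both that a single $h'$ suffices and that its contribution washes out of the final exponential. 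Everything else is the cancellation $-1-2-3+6=0$ together with the divisibility $24\mid h^2-1$.
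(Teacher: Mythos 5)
Your proof is correct and follows essentially the same route as the paper's: substitute \eqref{E:omega} into all four multipliers (each in the ``$h$ odd'' case), note that the Kronecker symbols combine to the symbol of a perfect square and hence drop out, and reduce the resulting exponent modulo $2\pi i\Z$ using $24\mid h^2-1$ (the paper phrases this as $\frac{1-h^2}{12}\in\Z$, which suffices in its normalization). The only addition is your careful justification that a single $h'$ with $hh'\equiv-1\Pmod k$ may be used in all four factors; the paper leaves this point implicit in its ``direct calculation,'' so your remark is a harmless (and welcome) refinement rather than a different approach.
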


\begin{proof}
	We use \eqref{E:omega}. Since $2\mid k$, we have that $h$ is odd. A direct calculation gives that
	\[
		\frac{\w_{h,k}\w_{h,\frac k2}\w_{h,\frac k3}}{\w_{h,\frac k6}} = -e^{2\pi iA},
	\]
	with
	\[
		A := -\frac{5h^2h'k}{72} + \frac{5k+18}{72}h + \frac{5h'k}{72} = \frac{5\frac k6\left(1-h^2\right)h'}{12} + \frac{5k+18}{72}h.
	\]
	Now $\gcd(h,6)=1$. Thus $\frac{1-h^2}{12}\in\Z$. This gives the claim.
\end{proof}

We next bound the Kloosterman sums.

\begin{lemma}\label{L:Kkmn}
	We have, for $\e>0$,
	\begin{align*}
		K_k^{[1]}(n,m),\ \K_{k,\ell}^{[1]}(n,m),\ K_k^{[2]}(\nu;n,m),\ \K_{k,\ell}^{[2]}(\nu;n,m) \ll_\e n^\frac13k^{\frac23+\e}.
	\end{align*}
\end{lemma}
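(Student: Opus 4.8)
The plan is to reduce each of the four sums to a classical (possibly incomplete) Kloosterman sum and then invoke \Cref{L:Klooster}. The first step is to insert \Cref{L:mult1}, which replaces the entire multiplier quotient $\frac{\w_{h,k}\w_{h,\frac k2}\w_{h,\frac k3}}{\w_{h,\frac k6}}$ by the clean phase $-e^{\pi i\frac{5k+18}{36}h}$; crucially this is a pointwise identity in $h$, so it is insensitive to the Farey constraint $N<k+k_1\le\ell$ appearing in the incomplete sums, and the argument will therefore be identical for the complete and incomplete versions. Writing $\pi i\frac{5k+18}{36}h=\frac{2\pi i}{k}\cdot\frac{k(5k+18)}{72}h$ and combining with the factor $e^{-\frac{2\pi i nh}{k}}$, the total $h$-exponent becomes $-\frac{2\pi i}{k}n'h$ with
\[
	n' := n-\frac{k(5k+18)}{72}.
\]
Since $6\mid k$, a short check shows $\frac{k(5k+18)}{72}\in\Z$, so $n'\in\Z$.

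Next I would collapse the $h'$-dependent phases. For $K_k^{[1]}$ and $\K_{k,\ell}^{[1]}$, combining $e^{\frac{\pi i}{2}\left(1-\frac{3k}{2}\right)h'}$ with $e^{\frac{2\pi i mh'}{k}}$ produces $e^{\frac{2\pi i}{k}m'h'}$ with $m':=m+\frac k4\left(1-\frac{3k}{2}\right)$, which again lies in $\Z$ thanks to $6\mid k$; for $K_k^{[2]}$ and $\K_{k,\ell}^{[2]}$ the identical computation with $e^{\frac{\pi i}{k}\left(-3\nu^2+\nu\right)h'}$ yields $m':=m+\frac{\nu(1-3\nu)}{2}\in\Z$. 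In every case the sum therefore equals $-K_k(n',m')$ (respectively $-\K_{k,\ell}(n',m')$), so that \Cref{L:Klooster} applies and bounds it by $O_\e\!\left(k^{\frac23+\e}\gcd(|n'|,k)^\frac13\right)$.

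The only genuinely non-formal step, and the one I would flag as the crux, is converting the factor $\gcd(|n'|,k)^{\frac13}$ into the desired $n^{\frac13}$: on its face $\gcd(|n'|,k)$ could be as large as $k\asymp n^{\frac12}$, which is too big. Here the key observation is the divisibility relation $72n=72n'+k(5k+18)$. Setting $d:=\gcd(|n'|,k)$, we have $d\mid 72n'$ and, since $d\mid k$, also $d\mid k(5k+18)$; hence $d\mid 72n$, giving $\gcd(|n'|,k)\le 72n$ and thus $\gcd(|n'|,k)^{\frac13}\ll n^{\frac13}$. Combining this with the Kloosterman bound yields $\ll_\e n^{\frac13}k^{\frac23+\e}$ for all four sums. (The sole degenerate case is $n'=0$, where the sum collapses to a Ramanujan sum of modulus $k$; since then $n\asymp k^2$, the trivial bound $\le k$ sits comfortably inside $n^{\frac13}k^{\frac23+\e}$, so this case can be disposed of separately.)
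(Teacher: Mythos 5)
Your proposal is correct and follows essentially the same route as the paper: rewrite the multiplier via \Cref{L:mult1}, recognize the result as a classical (respectively incomplete) Kloosterman sum $K_k(n',m')$ with $n'=n-\frac{k(5k+18)}{72}$, and invoke \Cref{L:Klooster}. The paper simply asserts $\gcd(|n'|,k)^{\frac13}\ll n^{\frac13}$ without comment, so your divisibility argument ($d\mid n'$ and $d\mid k$ imply $d\mid 72n$) and your separate treatment of the degenerate case $n'=0$ are welcome additions rather than deviations.
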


\begin{proof}
	Using \Cref{L:mult1} and \Cref{L:Klooster}, we have
	\begin{align*}
		K_k^{[1]}(n,m) &= K_k\left(n-(5k+18)\frac{k}{72},m+\frac k4\left(1-\frac{3k}{2}\right)\right)\\
		&\ll_\e k^{\frac23+\e}\gcd\left(\left|n-(5k+18)\frac{k}{72}\right|,k\right)^\frac13 \ll k^{\frac23+\e}n^\frac13.
	\end{align*}
	
	The remaining cases may be proved analogously.
\end{proof}

\subsection{$\gcd(k,6)=2$}

Define the Kloosterman sums
\begin{align*}
	K_k^{[3]}(n,m) &:= \sum_{\substack{0\le h<k\\\gcd(h,k)=1\\3\mid h'}} \frac{\w_{h,k}\w_{h,\frac k2}\w_{3h,k}}{\w_{3h,\frac k2}}e^{\frac{\pi i}{2}\left(1-\frac{3k}{2}\right)h'}e^{\frac{2\pi i}{k}\left(-nh+\frac{mh'}{3}\right)},\\
	K_k^{[4]}(\nu;n,m) &:= \sum_{\substack{0\le h<k\\\gcd(h,k)=1\\3\mid h'}} \frac{\w_{h,k}\w_{h,\frac k2}\w_{3h,k}}{\w_{3h,\frac k2}}e^{\frac{\pi i}{k}\left(-3\nu^2+\nu\right)h'}e^{\frac{2\pi i}{k}\left(-nh+\frac{mh'}{3}\right)}.
\end{align*}
For the range in the sums above, we may again choose any representatives $h,h' \pmod{k}$ satisfying $3\mid h'$. Note that the condition $3\mid h'$ enforces that we need to change $h'\mapsto h'+3k$. We also use the abbreviation
\begin{equation}\label{E:K4}
	K_k^{[4]}(\nu;n) := K_k^{[4]}(\nu;n,0).
\end{equation}

Again, we require the incomplete versions, $N+1\le\ell\le N+k-1$,
\begin{align*}
	\K_{k,\ell}^{[3]}(n,m) &:= \sum_{\substack{h\Pmod k^*\\N<k+k_1\le\ell\\3\mid h'}} \frac{\w_{h,k}\w_{h,\frac k2}\w_{3h,k}}{\w_{3h,\frac k2}}e^{\frac{\pi i}{2}\left(1-\frac{3k}{2}\right)h'}e^{\frac{2\pi i}{k}\left(-nh+\frac{mh'}{3}\right)},\\
	\K_{k,\ell}^{[4]}(\nu;n,m) &:= \sum_{\substack{h\Pmod k^*\\N<k+k_1\le\ell\\3\mid h'}} \frac{\w_{h,k}\w_{h,\frac k2}\w_{3h,k}}{\w_{3h,\frac k2}}e^{\frac{\pi i}{k}\left(-3\nu^2+\nu\right)h'}e^{\frac{2\pi i}{k}\left(-nh+\frac{mh'}{3}\right)}.
\end{align*}

Again, we evaluate the multiplier.

\begin{lemma}\label{L:we}
	We have 
	\[
		\frac{\w_{h,k}\w_{h,\frac k2}\w_{3h,k}}{\w_{3h,\frac k2}} = e^{\frac{2\pi i}{k}\left(\frac{k(k+2)}{8}h-\frac{k^2+2}{18}h'\right)}.
	\]
\end{lemma}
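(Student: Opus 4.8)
The plan is to reduce the identity to the explicit formula \eqref{E:omega}, exploiting that $\gcd(k,6)=2$ forces favorable parity and coprimality. First I would observe that since $2\mid k$ and $\gcd(h,k)=1$, both $h$ and $3h$ are odd, and since $3\nmid k$ we have $\gcd(3h,k)=\gcd(3h,\frac k2)=1$, so all four factors $\w_{h,k},\w_{h,\frac k2},\w_{3h,k},\w_{3h,\frac k2}$ are defined. Because the ``$h$ odd'' branch of \eqref{E:omega} is available whenever the upper entry is odd, I can use it uniformly for all four multipliers, irrespective of the parity of $\frac k2$.

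The crucial bookkeeping is the choice of inverses. For the moduli $k$ and $\frac k2$ the inverse of $h$ is $h'$, since the mod-$k$ inverse also inverts $h$ modulo $\frac k2$. For $3h$ I would use the hypothesis $3\mid h'$: writing $h'=3t$ gives $3h\cdot t\equiv hh'\equiv-1$, so $h'/3$ is an integer inverse of $3h$ modulo both $k$ and $\frac k2$. This is precisely the role of the congruence $3\mid h'$ in the definition of $K_k^{[3]},K_k^{[4]}$, and with these consistent choices \eqref{E:omega} applies directly.

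I would then split the computation into the Kronecker-symbol part and the exponential part. For the symbols, total multiplicativity of $\leg{\cdot}{\cdot}$ in the denominator gives $\leg{-k}{3h}=\leg{-k}3\leg{-k}h$ and $\leg{-k/2}{3h}=\leg{-k/2}3\leg{-k/2}h$; the factors $\leg{-k/2}h$ then cancel, $\leg{-k}h$ appears squared and so equals $1$, and the surviving quotient $\leg{-k}3/\leg{-k/2}3$ evaluates to $-1$ via $\leg{-1}3=\leg23=-1$ together with $k=2\cdot\frac k2$. For the exponentials I would collect the four arguments from \eqref{E:omega}, treating the $\frac14(\cdots)$ parts and the $\frac1{12}(k-k^{-1})(\cdots)$ parts separately. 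A direct computation then shows that the constant term of the combined exponent contributes a factor $e^{-\pi i}=-1$ that cancels the $-1$ coming from the symbols, while the part linear in $h$ and $h'$ reproduces $\frac{2\pi i}k\left(\frac{k(k+2)}8h-\frac{k^2+2}{18}h'\right)$.

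The genuine subtlety—and the step I expect to be the main obstacle—is that after this cancellation the combined exponent still carries a leftover term proportional to $\frac{h'k(h^2-1)}4$. I would dispose of it by noting that $h$ odd forces $8\mid h^2-1$, whence this quantity is an even integer and contributes trivially to $e^{-\pi i(\cdots)}$. The same divisibility also guarantees that the values produced by \eqref{E:omega} are independent of the chosen representatives of the inverses modulo $k$ and $\frac k2$, which legitimizes the clean substitutions $h'$ and $h'/3$ made above. Combining the symbol part, the exponential part, and the vanishing of this correction yields the stated formula.
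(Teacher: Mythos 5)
Your proposal is correct and follows essentially the same route as the paper: substitute the $h$-odd branch of \eqref{E:omega} into all four multipliers (with the inverses $h'$ for modulus $\frac k2$ and $\frac{h'}{3}$ for $3h$), collect the Kronecker symbols into an overall $-1$ that cancels against the half-integer constant in the combined exponent, and remove the residual $\frac{h'k\left(h^2-1\right)}{4}$ term via $8\mid h^2-1$ --- exactly the reduction the paper performs with $h^2\equiv1\Pmod 4$ and $2\mid k$. The only difference is that you spell out the symbol bookkeeping that the paper compresses into ``a direct calculation.''
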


\begin{proof}
	Note that $2\mid k$ implies that $h$ is odd, so we compute, using \eqref{E:omega},
	\[
		\frac{\w_{h,k}\w_{h,\frac k2}\w_{3h,k}}{\w_{3h,\frac k2}} = -e^{2\pi iB},
	\]
	where
	\[
		B := \frac{1}{72k}\left(\left(-9h^2h'+5h'+9h\right)k^2+(18h-36)k-8h'\right).
	\]
	As $2\mid k$, we have $2\nmid h$ and thus $h^2\equiv1\Pmod4$. Therefore, we obtain
	\[
		B \equiv \left(\frac k8+\frac14\right)h + \left(-\frac k8+\frac{5k}{72}-\frac{1}{9k}\right)h' - \frac12 \equiv \frac{k+2}{8}h + \left(-\frac{k}{18}-\frac{1}{9k}\right)h' + \frac12 \Pmod1.
	\]
	This gives the claim.
\end{proof}

Again, we need bounds for these Kloosterman sums.

\begin{lemma}\label{L:Kkl34}
	We have, for $\e>0$,
	\[
		K_k^{[3]}(n,m),\ \K_{k,\ell}^{[3]}(n,m),\ K_k^{[4]}(\nu;n,m),\ \K_{k,\ell}^{[4]}(\nu;n,m) \ll_\e n^\frac13k^{\frac23+\e}.
	\]
\end{lemma}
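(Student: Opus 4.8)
The plan is to follow the proof of \Cref{L:Kkmn}: linearize the multiplier with \Cref{L:we}, then recognize the resulting exponential sum as one of Rademacher's Kloosterman sums from \Cref{L:Klooster}. Substituting \Cref{L:we} into $K_k^{[4]}(\nu;n,m)$, the factor $\frac{\w_{h,k}\w_{h,\frac k2}\w_{3h,k}}{\w_{3h,\frac k2}}$ becomes $e^{\frac{2\pi i}{k}\left(\frac{k(k+2)}{8}h - \frac{k^2+2}{18}h'\right)}$, so, using $\frac{\pi i}{k}(-3\nu^2+\nu) = \frac{2\pi i}{k}\cdot\frac{-3\nu^2+\nu}{2}$, the summand equals
\[
e^{\frac{2\pi i}{k}\left[\left(\frac{k(k+2)}{8}-n\right)h + \left(-\frac{k^2+2}{18} + \frac{-3\nu^2+\nu}{2} + \frac{m}{3}\right)h'\right]}.
\]
Setting $a := \frac{k(k+2)}{8}-n$, the coefficient of $h$ is the integer $a$, since $k$ even forces $8\mid k(k+2)$; note it is independent of $m$, which is why the final bound will involve only $n$.

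The feature absent from \Cref{L:Kkmn} is the condition $3\mid h'$ together with the denominators $18$ and $3$ in the coefficient of $h'$, and the main point of the argument is that the former exactly compensates the latter. I would exploit $3\mid h'$ to clear the denominators: since $\gcd(3,k)=1$ (as $\gcd(k,6)=2$), the class $3$ is invertible mod $k$ with inverse $\overline{3}$, and $\frac{h'}{3}\in\Z$ satisfies $\frac{h'}{3}\equiv\overline{3}\,h'\pmod k$. Using that $6\mid k^2+2$ (because $k$ is even and $k^2\equiv1\pmod 3$) and that $\frac{-3\nu^2+\nu}{2}=\frac{\nu(1-3\nu)}{2}\in\Z$, the coefficient of $h'$ reduces modulo $k$ to the integer
\[
m' := \frac{-3\nu^2+\nu}{2} + \left(m - \frac{k^2+2}{6}\right)\overline{3}.
\]
Hence the summand becomes $e^{\frac{2\pi i}{k}(ah + m'h')}$, which depends only on $h,h'\pmod k$; in particular the constraint $3\mid h'$ is fully absorbed and the sum runs over all of $h\pmod{k}^{*}$, giving $K_k^{[4]}(\nu;n,m) = K_k(-a,m')$. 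Since no change of summation variable is made, the Farey condition $N<k+k_1\le\ell$ is untouched, so likewise $\K_{k,\ell}^{[4]}(\nu;n,m) = \K_{k,\ell}(-a,m')$.

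It then remains to invoke \Cref{L:Klooster} and bound the gcd. This gives $K_k(-a,m'),\ \K_{k,\ell}(-a,m') \ll_\e k^{\frac23+\e}\gcd(|a|,k)^{\frac13}$, and since $8a = k(k+2)-8n \equiv -8n\pmod k$ and $\gcd(a,k)\mid\gcd(8a,k)$, we obtain $\gcd(|a|,k)\le\gcd(8n,k)\le 8n$, yielding the claimed bound $\ll_\e n^{\frac13}k^{\frac23+\e}$. The sums $K_k^{[3]}$ and $\K_{k,\ell}^{[3]}$ are handled identically, the only change being that $\frac{-3\nu^2+\nu}{2}$ is replaced by $\frac{k}{4}\left(1-\frac{3k}{2}\right)=\frac{k(2-3k)}{8}$, which is again an integer when $\gcd(k,6)=2$, so the same reduction to $K_k(-a,\cdot)$ applies. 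I expect the one genuine obstacle to be the arithmetic bookkeeping of the second paragraph: checking that $3\mid h'$ precisely clears the denominators $3$ and $18$ so that $m'$ is an honest integer and the object is a bona fide classical Kloosterman sum whose first argument $-a$ is independent of $m$; once that is verified, the gcd bound and the appeal to \Cref{L:Klooster} are routine.
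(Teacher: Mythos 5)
Your proof is correct and takes essentially the same route as the paper: linearize the multiplier via \Cref{L:we}, recognize the result as a classical Kloosterman sum $K_k$, and apply \Cref{L:Klooster} together with the observation that $\gcd\left(\left|n-\frac{k(k+2)}{8}\right|,k\right)$ divides $8n$. The only (cosmetic) difference is that the paper removes the condition $3\mid h'$ by the explicit change of variables $h'\mapsto 3h'$, $h\mapsto[3]_kh$, whereas you absorb it in place via $\frac{h'}{3}\equiv[3]_kh'\pmod k$; your variant has the small advantage of leaving the Farey condition $N<k+k_1\le\ell$ in the incomplete sums untouched.
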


\begin{proof}
	Note that $\frac{h'}{3}\equiv[3]_kh'\Pmod k$, where $[a]_b$ denotes the inverse of $a\Pmod b$. Thus we have, using \Cref{L:we},
	\[
		K_k^{[3]}(n,m) = \sum_{\substack{h\Pmod k^*\\3\mid h'}} e^{\frac{2\pi i}{k}\left(\left(\frac{k(k+2)}{8}-n\right)h+\left(-\frac{k^2+2}{18}+\frac k4\left(1-\frac{3k}{2}\right)+[3]_km\right)h'\right)}.
	\]
	To get rid of the condition $3\mid h'$, we change $h'\mapsto3h'$ and $h\mapsto[3]_kh$ to obtain that
	\[
		K_k^{[3]}(n,m) = K_k\left([3]_k\left(n-\frac{k(k+2)}{8}\right),m-\frac{k^2+2}{6}+\frac{3k}{4}\left(1-\frac{3k}{2}\right)\right).
	\]
	Using \Cref{L:Klooster} gives the claim. The remaining cases are treated in the same way.
\end{proof}

\subsection{$\gcd(k,6)=3$}

We require the following Kloosterman sums
\begin{align*}
	K_k^{[5]}(n,m) &:= \sum_{\substack{0\le h<k\\\gcd(h,k)=1\\8\mid h'}} \frac{\w_{h,k}\w_{2h,k}\w_{h,\frac k3}}{\w_{2h,\frac k3}}e^\frac{3\pi ih'}{4k}e^{\frac{2\pi i}{k}\left(-nh+\frac{mh'}{2}\right)},\\
	K_k^{[6]}(\nu;n,m) &:= \sum_{\substack{0\le h<k\\\gcd(h,k)=1\\8\mid h'}} \frac{\w_{h,k}\w_{2h,k}\w_{h,\frac k3}}{\w_{2h,\frac k3}}e^{\frac{\pi i}{k}\left(-3\nu^2+\nu\right)h'}e^{\frac{2\pi i}{k}\left(-nh+\frac{mh'}{2}\right)}.
\end{align*}
For the range in the sums above, we may again choose any representatives $h,h' \pmod{k}$ satisfying $8\mid h'$. Note that the condition $8\mid h'$ enforces that we need to change $h'\mapsto h'+8k$. We also use the abbreviation
\begin{equation}\label{E:K6}
	K_k^{[6]}(\nu;n) := K_k^{[6]}(\nu;n,0).
\end{equation}

We also need the incomplete versions, $N+1\le\ell\le N+k-1$,
\begin{align*}
	\K_{k,\ell}^{[5]}(n,m) &:= \sum_{\substack{h\Pmod k^*\\N<k+k_1\le\ell\\8\mid h'}} \frac{\w_{h,k}\w_{2h,k}\w_{h,\frac k3}}{\w_{2h,\frac k3}}e^\frac{3\pi ih'}{4k}e^{\frac{2\pi i}{k}\left(-nh+\frac{mh'}{2}\right)},\\
	\K_{k,\ell}^{[6]}(\nu;n,m) &:= \sum_{\substack{h\Pmod k^*\\N<k+k_1\le\ell\\8\mid h'}} \frac{\w_{h,k}\w_{2h,k}\w_{h,\frac k3}}{\w_{2h,\frac k3}}e^{\frac{\pi i}{k}\left(-3\nu^2+\nu\right)h'}e^{\frac{2\pi i}{k}\left(-nh+\frac{mh'}{2}\right)}.
\end{align*}

Next we evaluate the multiplier.

\begin{lemma}\label{L:omega1}
	We have
	\[
		\frac{\w_{h,k}\w_{2h,k}\w_{h,\frac k3}}{\w_{2h,\frac k3}} = (-1)^\frac{k+1}{2}e^{\frac{4\pi ikh}{9}-\frac{2\pi i}{24k}\left(k^2-3\right)h'}.
	\]
\end{lemma}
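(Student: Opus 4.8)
The plan is to proceed exactly as in the proofs of \Cref{L:mult1} and \Cref{L:we}: insert the explicit formula \eqref{E:omega} for each of the four multipliers and simplify the resulting product. Since $\gcd(k,6)=3$, both $k$ and $\frac k3$ are odd, so every factor $\w_{h,k},\w_{2h,k},\w_{h,\frac k3},\w_{2h,\frac k3}$ is evaluated through the second ($k$ odd) branch of \eqref{E:omega}; note that this branch is \emph{forced} for the even arguments $2h$, since the first branch requires an odd top entry. Before substituting, I would record the inverses entering each factor in terms of the single quantity $h'$ (with $hh'\equiv-1\Pmod k$). As $k$ is odd, $[2]_k=\frac{k+1}2$, and the inverse of $2h\Pmod k$ is $[2]_kh'$ because $2h\cdot[2]_kh'=(2[2]_k)(hh')\equiv-1\Pmod k$; moreover $h'$ also serves as the inverse of $h\Pmod{\frac k3}$ (as $hh'\equiv-1\Pmod{\frac k3}$), and $[2]_{\frac k3}h'=\frac{\frac k3+1}2h'$ as the inverse of $2h\Pmod{\frac k3}$.

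I would then separate the computation into the Kronecker-symbol part and the phase part. For the Kronecker symbols, multiplicativity together with $\gcd(h,k)=1$ gives
\[
	\leg{-h}{k}\leg{-2h}{k}\frac{\leg{-h}{\frac k3}}{\leg{-2h}{\frac k3}} = \leg{2}{k}\leg{2}{\frac k3},
\]
and since $k=3\cdot\frac k3$ and $\leg23=-1$ this collapses to $-1$. The constant phases $-\pi i\frac14(m-1)$ from the four factors (the last entering with opposite sign) sum to $-\pi i\frac{k-1}2$, contributing $(-1)^{\frac{k-1}2}$. Multiplying these yields the prefactor $-(-1)^{\frac{k-1}2}=(-1)^{\frac{k+1}2}$, matching the claimed sign.

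It remains to evaluate the Dedekind-sum-type phases $-\frac{\pi i}{12}\left(m-\frac1m\right)\left(2a-a'+a^2a'\right)$, summed over the four factors. After substituting the inverses above, I would collect the accumulated exponent into the form $(-1)^{\frac{k+1}2}e^{2\pi i D}$ and verify that $D\equiv\frac{2kh}9-\frac{k^2-3}{24k}h'\Pmod1$; the $h$-linear part arises both from the $2a$-terms and, after reducing $a^2a'$, from the identities $h^2h'\equiv-h\Pmod k$ and $(2h)^2[2]_kh'\equiv-2h\Pmod k$, while the surviving $h'$-terms produce the coefficient $-\frac{k^2-3}{24k}$. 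I expect the main obstacle to be the bookkeeping of the $\frac1m$ contributions: reducing $a^2a'$ modulo $k$ (resp.\ $\frac k3$) changes it by a multiple of $k$ (resp.\ $\frac k3$), which cancels the $\frac1m$ factor and feeds $O(1)$ corrections back into both the $h$- and $h'$-coefficients. Tracking these consistently, using the congruence properties of $k$ modulo small integers exactly as $h^2\equiv1\Pmod4$ was used in \Cref{L:we}, and confirming independence of the chosen representative of $h'$, is the delicate part; the remainder is a routine, if lengthy, reduction of the exponent modulo $2$.
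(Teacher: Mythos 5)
Your outline is the same as the paper's: substitute \eqref{E:omega} (odd-modulus branch) into all four factors, split off the Kronecker symbols and the constant phases, and reduce the remaining Dedekind-type exponent modulo $1$. The parts you actually carry out are correct: the Kronecker symbols do collapse to $\leg{2}{3}=-1$, the constants sum to $-\pi i\frac{k-1}{2}$, and the product gives the prefactor $(-1)^{\frac{k+1}{2}}$. The paper does exactly this, except that it records the whole remaining phase in one closed form,
\[
	C=\frac{1}{72k}\left(\left(-8h^2h'+5h'-16h-18\right)k^2+18k-9h'\right),
\]
obtained with the normalizations $8\mid h'$ and $(2h)'=\frac{h'}{2}$ that are in force throughout that section, and then reduces only the single term $-\frac{h^2h'k}{9}$ via $h^2\equiv1\Pmod3$.

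The evaluation of that exponent is the entire content of the lemma, and you defer it wholesale, asserting that the bookkeeping ``would yield'' $D\equiv\frac{2kh}{9}-\frac{k^2-3}{24k}h'$. It would not: carrying the computation through with the paper's normalization gives $D\equiv\frac{1-k}{4}-\frac{2kh}{9}-\frac{k^2+3}{24k}h'\Pmod{1}$ --- the $h$-coefficient in $C$ above is visibly $-\frac{16k^2}{72k}=-\frac{2k}{9}$, and the reduced $h'$-coefficient $-\frac{k}{9}+\frac{5k}{72}-\frac{1}{8k}$ equals $-\frac{k^2+3}{24k}$, not $-\frac{k^2-3}{24k}$. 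A concrete check: at $k=9$, $h=1$, $h'=8$ the left-hand side is $e^{\pi i\left(s(1,9)+s(2,9)+s(1,3)-s(2,3)\right)}=e^{\frac{7\pi i}{9}}$, while the displayed right-hand side is $e^{\frac{11\pi i}{9}}$; the two versions happen to coincide at $k=3$, so a spot check there is misleading. Thus you cannot simply assert the displayed target; executing your plan honestly would surface a discrepancy rather than complete the proof. Relatedly, the ``independence of the chosen representative of $h'$'' you hope to confirm is false for the right-hand side as written (it changes by $e^{-\frac{\pi i(k^2-3)}{12}}\neq1$ under $h'\mapsto h'+k$); the identity only makes sense under the standing normalization $8\mid h'$, and your substitute $[2]_kh'=\frac{k+1}{2}h'$ for the inverse of $2h$ is a different representative from the paper's $\frac{h'}{2}$, which alters all of the $\frac{1}{m}$-contributions you flag as delicate. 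Pinning these down is not routine tidying; it is where the lemma lives.
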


\begin{proof}
	Now we use \eqref{E:omega} for $k$ odd to give
	\[
		\frac{\w_{h,k}\w_{2h,k}\w_{h,\frac k3}}{\w_{2h,\frac k3}} = -e^{2\pi iC},
	\]
	where
	\[
		C := \frac{1}{72k}\left(\left(-8h^2h'+5h'-16h-18\right)k^2+18k-9h'\right).
	\]
	Since $3\nmid h$, we have $h^2\equiv1\Pmod3$. Therefore
	\[
		-\frac{h^2h'}{3}\frac k3 \equiv -\frac{h'k}{9}\Pmod1.
	\]
	Thus
	\[
		C \equiv \frac{1-k}{4} + \frac{2k}{9}h + \left(-\frac k9+\frac{5k}{72}-\frac{1}{8k}\right)h'\Pmod1.
	\]
	This gives the claim.
\end{proof}

Similar to before, we obtain the following bounds for the Kloosterman sums.

\begin{lemma}\label{L:Kk56}
	We have, for $\e>0$,
	\[
		K_k^{[5]}(n,m),\ \K_{k,\ell}^{[5]}(n,m),\ \K_k^{[6]}(\nu;n,m),\ \K_{k,\ell}^{[6]}(\nu;n,m) \ll_\e n^\frac13k^{\frac23+\e}.
	\]
\end{lemma}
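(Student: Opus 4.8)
The plan is to mirror the proofs of \Cref{L:Kkmn} and \Cref{L:Kkl34}: rewrite the multiplier using \Cref{L:omega1}, absorb the divisibility condition $8\mid h'$ together with the half-integral exponent via a change of variables, and then identify the result with a classical (complete or incomplete) Kloosterman sum so that \Cref{L:Klooster} applies. Inserting \Cref{L:omega1} into the definition of $K_k^{[5]}(n,m)$, and noting that $\gcd(k,6)=3$ forces $k$ odd, the prefactor $(-1)^{\frac{k+1}{2}}$ is a unimodular constant independent of $h$ and pulls out of the sum without affecting the bound. I would collect the remaining exponentials in the form $\frac{2\pi i}{k}(ah+bh')$, so that the coefficient of $h$ is $a=\frac{2k^2}{9}-n$, which is an integer because $3\mid k$ gives $9\mid k^2$. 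The coefficient $b$ of $h'$ combines the term $-\frac{k^2-3}{24}$ from \Cref{L:omega1}, the $\frac38$ coming from $e^{\frac{3\pi ih'}{4k}}$, and the contribution of $\frac{mh'}{2}$; using $8\mid h'$ I would replace $\frac{h'}{2}$ by $[2]_kh'\Pmod k$, which is legitimate since $2\mid h'$ and $\gcd(2,k)=1$.

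Next I would clear the condition $8\mid h'$ with the substitution $h'\mapsto 8h'$, $h\mapsto[8]_kh$, a bijection of $(\Z/k\Z)^*$ preserving $hh'\equiv-1\Pmod k$ (as $8[8]_k\equiv1$). The point of this move is integrality: multiplying $b$ by $8$ turns the offending fractions into integers, since $-\frac{k^2-3}{3}$ is an integer (again as $3\mid k$) and $8[2]_km\equiv 4m\Pmod k$, while the new coefficient of $h$ is $a[8]_k$, still an integer. The sum then becomes a classical $K_k(A,B)$ with $A=-[8]_k a$ and $B$ collecting the $m$- and $k$-dependent constants. Applying \Cref{L:Klooster} yields $K_k^{[5]}(n,m)\ll_\e k^{\frac23+\e}\gcd(|A|,k)^{\frac13}$, and since $\gcd([8]_k,k)=1$ we have $\gcd(|A|,k)=\gcd(|a|,k)=\gcd(|\frac{2k^2}{9}-n|,k)$. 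A short divisibility argument shows this last gcd is $\ll n$: if $g:=\gcd(\frac{2k^2}{9}-n,k)$ then $g\mid k$ forces $g$ to divide $\frac{2k^2}{9}$ up to a bounded factor, whence $g\mid n$ up to such a factor and $g\ll n$. This gives the claimed $\ll_\e n^{\frac13}k^{\frac23+\e}$.

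For $K_k^{[6]}(\nu;n,m)$ the only change is that $e^{\frac{3\pi ih'}{4k}}$ is replaced by $e^{\frac{\pi i}{k}(-3\nu^2+\nu)h'}$, which alters $b$ (hence $B$) but not $a$ (hence not $A$). Since the bound in \Cref{L:Klooster} depends only on the first argument through $\gcd(|A|,k)$, the estimate holds uniformly in $\nu$. The incomplete sums $\K_{k,\ell}^{[5]}$ and $\K_{k,\ell}^{[6]}$ are treated identically, reducing to $\K_{k,\ell}(A,B)$ and invoking the second bound of \Cref{L:Klooster}.

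I expect the main obstacle to be purely the bookkeeping around the condition $8\mid h'$: one must simultaneously use $8\mid h'$ to render each $h'$-term integral, verify that the substitution $h'\mapsto 8h'$, $h\mapsto[8]_kh$ produces genuinely integral coefficients so that the object really is a classical Kloosterman sum, and check, for the incomplete versions, that this change of variables is compatible with the Farey restriction $N<k+k_1\le\ell$ so that \Cref{L:Klooster} is applicable. All of the analytic content is contained in \Cref{L:Klooster}; the remainder is careful tracking of exponents modulo $k$.
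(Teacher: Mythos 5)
Your proposal is correct and follows exactly the route the paper intends: the paper gives no separate proof of this lemma, saying only ``similar to before,'' and the ``before'' is precisely your argument — insert the multiplier evaluation of \Cref{L:omega1}, use $8\mid h'$ and the relabeling $h\mapsto[8]_kh$, $h'\mapsto 8h'$ to produce integral exponents and reduce to the classical (complete or incomplete) Kloosterman sums of \Cref{L:Klooster}, with $\gcd\bigl(\bigl|\frac{2k^2}{9}-n\bigr|,k\bigr)\mid 9n$ giving the factor $n^{1/3}$. The compatibility of the change of variables with the Farey condition $N<k+k_1\le\ell$, which you rightly flag, is likewise left implicit in the paper's treatment of the analogous cases.
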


\subsection{$\gcd(k,6)=1$}

The Kloosterman sums are
\begin{align*}
	K_k^{[7]}(n,m) &:= \sum_{\substack{0\le h<k\\\gcd(h,k)=1\\24\mid h'}} \frac{\w_{h,k}\w_{2h,k}\w_{3h,k}}{\w_{6h,k}}e^\frac{3\pi ih'}{4k}e^{\frac{2\pi i}{k}\left(-nh+\frac{h'm}{6}\right)},\\
	K_k^{[8]}(\nu;n,m) &:= \sum_{\substack{0\le h<k\\\gcd(h,k)=1\\24\mid h'}} \frac{\w_{h,k}\w_{2h,k}\w_{3h,k}}{\w_{6h,k}}e^{\frac{\pi i}{k}\left(-3\nu^2-\nu\right)h'}e^{\frac{2\pi i}{k}\left(-nh+\frac{h'm}{6}\right)}.
\end{align*}
For the range in the sums above, we may again choose any representatives $h,h' \pmod{k}$ satisfying $24\mid h'$. Note that the condition $3\mid h'$ enforces that we need to change $h'\mapsto h'+3k$. We also use the abbreviation
\begin{equation}\label{E:K8}
	K_k^{[8]}(\nu;n) = K_k(\nu;n,0).
\end{equation}
We also need the incomplete versions, $N+1\le\ell\le N+k-1$,
\begin{align*}
	\K_{k,\ell}^{[7]}(n,m) &:= \sum_{\substack{h\Pmod k^*\\N<k+k_1\le\ell\\24\mid h'}} \frac{\w_{h,k}\w_{2h,k}\w_{3h,k}}{\w_{6h,k}}e^\frac{3\pi ih'}{4k}e^{\frac{2\pi i}{k}\left(-nh+\frac{h'm}{6}\right)},\\
	\K_{k,\ell}^{[8]}(\nu;n,m) &:= \sum_{\substack{h\Pmod k^*\\N<k+k_1\le\ell\\24\mid h'}} \frac{\w_{h,k}\w_{2h,k}\w_{3h,k}}{\w_{6h,k}}e^{\frac{\pi i}{k}\left(-3\nu^2+\nu\right)h'}e^{\frac{2\pi i}{k}\left(-nh+\frac{h'm}{6}\right)}.
\end{align*}

Again, we evaluate the multiplier.

\begin{lemma}\label{L:omega2}
	We have
	\[
		\frac{\w_{h,k}\w_{2h,k}\w_{3h,k}}{\w_{6h,k}} = (-1)^\frac{k+1}{2}e^\frac{10\pi i\left(k^2-1\right)h'}{72k}.
	\]
\end{lemma}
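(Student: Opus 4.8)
The strategy mirrors the preceding multiplier computations (Lemmas \ref{L:mult1}, \ref{L:we}, \ref{L:omega1}): substitute the explicit formula \eqref{E:omega} for each $\w$-factor, collect the resulting exponentials into a single exponent, and simplify modulo $1$ using arithmetic constraints on $h$. Since $\gcd(k,6)=1$ here, $k$ is odd, so we use the second (\,$k$ odd\,) branch of \eqref{E:omega} for all four factors $\w_{h,k}$, $\w_{2h,k}$, $\w_{3h,k}$, $\w_{6h,k}$. First I would record the four Kronecker symbols $\leg{-h}{k}$, $\leg{-2h}{k}$, $\leg{-3h}{k}$, $\leg{-6h}{k}$; by multiplicativity their combination $\leg{-h}{k}\leg{-2h}{k}\leg{-3h}{k}\leg{-6h}{k}^{-1} = \leg{-h}{k}\leg{(-2h)(-3h)(-6h)^{-1}}{k}$ should collapse. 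Writing each symbol as a product and using $\leg{36h^2}{k}=1$ (as $\gcd(6h,k)=1$ and $36h^2$ is a square), I expect the Kronecker-symbol contribution to reduce to $\leg{-1}{k}=(-1)^{\frac{k-1}{2}}$, which together with an overall sign from the phase gives the stated $(-1)^{\frac{k+1}{2}}$ prefactor.

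Next I would assemble the phase. For each factor the exponent in \eqref{E:omega} has the shape $-\pi i\left(\tfrac14(k-1)+\tfrac{1}{12}(k-\tfrac1k)(2h-h'+h^2h')\right)$, but here the modulus of the $j$-th factor is $k$ while the ``numerator'' is $jh$ for $j\in\{1,2,3,6\}$. The key subtlety is that $h'$ in \eqref{E:omega} is defined relative to the pair $(jh,k)$, i.e.\ it is the inverse of $jh$ modulo $k$ rather than of $h$; so in the $\w_{jh,k}$ factor the term $(2\cdot jh - \overline{jh} + (jh)^2\,\overline{jh})$ appears, where $\overline{jh}\equiv [j]_k h' \pmod k$. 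I would therefore express every such inverse in terms of the fixed $h'$ (using $[2]_k,[3]_k,[6]_k$) and combine. The $\tfrac14(k-1)$ pieces from the four factors sum to $k-1$ times a constant, contributing to the sign already accounted for. The surviving $h$-linear and $h'$-linear terms, after multiplying out $\tfrac{1}{12}(k-\tfrac1k)$, are what must be shown to equal $\tfrac{10(k^2-1)}{72k}h'$ modulo $1$ (with the $h$-dependence cancelling).

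The main obstacle is the bookkeeping of the inverses: because $\overline{2h},\overline{3h},\overline{6h}$ are genuinely different residues, the quadratic terms $(jh)^2\overline{jh}=j^2h^2\cdot[j]_k h'\equiv j h^2 h'\pmod{k}$ do not simplify as cleanly as in the $6\mid k$ case, and one must carefully track the factor $\tfrac1k$ in $(k-\tfrac1k)$, which produces genuinely non-integral contributions that only combine to something in $\tfrac{1}{72k}\Z$ after using $h^2\equiv 1\pmod{?}$. Concretely, I would exploit that $\gcd(h,6)=1$ forces $h^2\equiv1\pmod{24}$, so that terms of the form $\tfrac{c}{k}(h^2-1)h'$ with $24\mid c$ vanish modulo $1$; the coefficient $2h-h'+h^2h'=2h+(h^2-1)h'$ then contributes only its $h$-linear and ``$h^2-1$'' parts, and I expect the $h$-linear parts to cancel across the four factors (by the telescoping $1+? -? -?$ of the coefficients $j$), leaving exactly the $h'$-term $\tfrac{10(k^2-1)}{72k}h'$. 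Verifying this final cancellation is the one computation I would carry out in full, checking the numerator $10$ and the congruence $h^2\equiv1$ reductions explicitly rather than asserting them.
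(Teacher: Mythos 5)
Your plan reproduces the paper's proof: substitute the $k$-odd branch of \eqref{E:omega} into all four factors, replace the inverse of $jh$ by $h'/j$ (legitimate since $24\mid h'$), and observe that both the $2jh$-terms and the $jh^2h'$-terms cancel because $1+2+3-6=0$, leaving $-h'\left(1+\tfrac12+\tfrac13-\tfrac16\right)=-\tfrac{5h'}{3}$, which after multiplication by $-\tfrac{1}{12}\left(k-\tfrac1k\right)$ yields exactly the exponent $\tfrac{10\pi i\left(k^2-1\right)h'}{72k}$ with no surviving $h$-dependence. Two small corrections that actually carrying out the computation would reveal: the Kronecker symbols multiply to $\leg{(-h)(-2h)(-3h)(-6h)}{k}=\leg{36h^4}{k}=1$, not $\leg{-1}{k}$, so the prefactor $(-1)^{\frac{k+1}{2}}$ must come from the $\tfrac14(k-1)$ phase terms rather than from the symbols; and $\gcd(k,6)=1$ does \emph{not} force $\gcd(h,6)=1$, so the congruence $h^2\equiv1\Pmod{24}$ is unavailable in this case --- but you never need it, precisely because the quadratic terms cancel identically.
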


\begin{proof}
	We have, again using \eqref{E:omega} for $k$ odd,
	\[
		\frac{\w_{h,k}\w_{2h,k}\w_{3h,k}}{\w_{6h,k}} = -e^{2\pi iD},
	\]
	where
	\[
		D := \frac{1}{72k}\left(\left(5h'-18\right)k^2+18k-5h'\right) = \frac{5\left(k^2-1\right)}{72k}h' + \frac{1-k}{4}. \qedhere
	\]
\end{proof}

We now bound the Kloosterman sum.

\begin{lemma}\label{L:Kk78}
	We have, for $\e>0$,
	\[
		K_k^{[7]}(n,m),\ \K_{k,\ell}^{[7]}(n,m),\ K_k^{[8]}(\nu;n,m),\ \K_{k,\ell}^{[8]}(\nu;n,m) \ll_\e n^\frac13k^{\frac23+\e}.
	\]
\end{lemma}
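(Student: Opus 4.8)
The plan is to mirror the proof of \Cref{L:Kkl34}: use the explicit evaluation in \Cref{L:omega2} to collapse each of $K_k^{[7]}$ and $K_k^{[8]}$ into a single classical Kloosterman sum $K_k(\cdot,\cdot)$, and then invoke \Cref{L:Klooster}; the incomplete sums $\K_{k,\ell}^{[7]}$ and $\K_{k,\ell}^{[8]}$ are handled verbatim with $\K_{k,\ell}$ in place of $K_k$. The decisive simplification over the earlier cases is that, by \Cref{L:omega2}, the multiplier $\frac{\w_{h,k}\w_{2h,k}\w_{3h,k}}{\w_{6h,k}}$ equals $(-1)^{(k+1)/2}$ times a phase depending \emph{only} on $h'$. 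Thus the unimodular constant $(-1)^{(k+1)/2}$ is harmless for an absolute-value bound, and the coefficient of $h$ in the total exponent stays equal to $-n$.

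First I would substitute \Cref{L:omega2} into the definition of $K_k^{[7]}(n,m)$ and collect the three $h'$-phases, coming from the multiplier, from $e^{3\pi i h'/(4k)}=e^{\frac{2\pi i}{k}\cdot\frac38 h'}$, and from $\frac{2\pi i}{k}\cdot\frac{h'm}{6}$. Over the common denominator $72$ the total exponent becomes $\frac{2\pi i}{k}\bigl(-nh+\frac{5k^2+22+12m}{72}h'\bigr)$. To eliminate the congruence condition $24\mid h'$ I would then apply $h'\mapsto 24h'$ and $h\mapsto[24]_k h$, which is a bijection of $(\Z/k\Z)^*$ since $\gcd(24,k)=1$. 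Under it the $h$-coefficient becomes $-n[24]_k$ and the $h'$-coefficient is multiplied by $24$, giving $\frac{5k^2+22+12m}{3}$; the one point to verify is that this is an integer, and indeed modulo $3$ it equals $2k^2+1$, which vanishes because $\gcd(k,3)=1$ forces $k^2\equiv1\pmod{3}$. Hence $K_k^{[7]}(n,m)=(-1)^{(k+1)/2}K_k\bigl(n[24]_k,\frac{5k^2+22+12m}{3}\bigr)$, and \Cref{L:Klooster} yields $\ll_\e k^{2/3+\e}\gcd(|n[24]_k|,k)^{1/3}=k^{2/3+\e}\gcd(|n|,k)^{1/3}\ll_\e n^{1/3}k^{2/3+\e}$, using that $[24]_k$ is a unit modulo $k$ and $\gcd(|n|,k)\le n$.

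The case $K_k^{[8]}(\nu;n,m)$ is identical, except that $e^{3\pi i h'/(4k)}$ is replaced by $e^{\frac{\pi i}{k}(-3\nu^2-\nu)h'}$, which carries a factor $\pi i/k$ (equivalently a hidden $\tfrac12$). The point to watch is that the same substitution $h'\mapsto 24h'$ clears every denominator simultaneously: the resulting $h'$-coefficient is $\frac{5(k^2-1)}{3}-12(3\nu^2+\nu)+4m$, each summand of which is an integer, the first again by $k^2\equiv1\pmod{3}$. Thus $K_k^{[8]}(\nu;n,m)=(-1)^{(k+1)/2}K_k(n[24]_k,\cdot)$ and the same estimate follows, and the incomplete sums are treated the same way via the $\K_{k,\ell}$ bound.

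I expect no conceptual obstacle here; the only real work is the bookkeeping of pushing the half-integral phases through the substitution and confirming simultaneous integrality. This is precisely where $\gcd(k,6)=1$ enters: via $k$ odd (so $k^2\equiv1\pmod{8}$) and $3\nmid k$ (so $k^2\equiv1\pmod{3}$), i.e.\ $24\mid k^2-1$, which guarantees that after the factor $24$ every coefficient lands in $\Z$.
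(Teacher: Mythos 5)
Your proposal is correct and follows essentially the same route as the paper: substitute the multiplier evaluation of \Cref{L:omega2}, collect the $h'$-phases over the denominator $72$, remove the condition $24\mid h'$ via $h'\mapsto 24h'$, $h\mapsto[24]_kh$, and apply \Cref{L:Klooster}; your coefficient $\frac{5k^2+22+12m}{3}$ agrees with the paper's $4\bigl(\frac{5k^2}{12}+\frac{11}{6}+m\bigr)$. The integrality checks via $24\mid k^2-1$ and the observation that $\gcd(|n[24]_k|,k)=\gcd(|n|,k)$ are details the paper leaves implicit, so nothing is missing.
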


\begin{proof}
	By \Cref{L:omega2}, we have
	\begin{align*}
		K_k^{[7]}(n,m) &= (-1)^\frac{k+1}{2}\sum_{\substack{h\Pmod k^*\\24\mid h'}} e^{\frac{2\pi i}{k}\left(-nh+\left(\frac{5k^2}{12}+\frac{11}{6}+m\right)\frac{h'}{6}\right)}\\
		&= (-1)^\frac{k+1}{2}\sum_{h\Pmod k^*} e^{\frac{2\pi i}{k}\left(-n[24]_kh+\left(\frac{5k^2}{12}+\frac{11}{6}+m\right)4h'\right)}.
	\end{align*}
	Now the bound follows as before, using \Cref{L:Klooster}. The remaining cases are treated in the same way.
\end{proof}

For $\gcd(k,6)=1$, we also require the following Kloosterman sums
\begin{equation}\label{E:K}
	\CK_k(n) := \sum_{h\Pmod k^*} \frac{\w_{h,k}\w_{2h,k}\w_{6h,k}}{\w_{3h,k}^3}e^{-\frac{2\pi inh}{k}}.
\end{equation}

\section{The Circle Method}\label{S:CM}

We follow Rademacher's approach \cite{Rad}. Note that $g_2$ is (up to a $q$-power) a modular form. Thus we can use \cite{RadZ} to obtain an exact formula for its coefficients. This yields the third term in \Cref{T:p2} as well a contribution if $\gcd(k,6)=2$ which turns out to cancel with the contribution from $g_1(q)$ (see the discussion in Subsubsection \ref{SSS:S21}).

From (4.1) of \cite{BM}, we have, for any\footnote{Note that in \cite{BM} we choose $N=\flo{\sqrt n}$ and in this paper, we let $N\to\infty$.} $N\in\N$,
\[
	a(n) = \sum_{\substack{0\le h<k\le N\\\gcd(h,k)=1}} e^{-\frac{2\pi inh}{k}}\int_{-\vth_{h,k}'}^{\vth_{h,k}''} g_1\left(e^{\frac{2\pi i}{k}(h+iz)}\right)e^\frac{2\pi nz}{k} d\Phi,
\]
where we use the notation from \Cref{S:pre}. We split
\[
	a(n) = \ssum_6 + \ssum_3 + \ssum_2 + \ssum_1,
\]
where $\sum_d$ denotes the sum over all terms $0\le h<k\le N$ with $\gcd(h,k)=1$ and $\gcd(k,6)=d$. In the following we repeatedly require the splitting
\begin{equation}\label{E:spit}
	\int_{-\vth_{h,k}'}^{\vth_{h,k}''} = \int_{-\frac{1}{k(k+N)}}^\frac{1}{k(k+N)} + \int_{-\frac{1}{k(k+k_1)}}^{-\frac{1}{k(k+N)}} + \int_\frac{1}{k(k+N)}^\frac{1}{k(k+k_2)}.
\end{equation}
We also use the decomposition
\[
	\int_{-\frac{1}{k(k+k_1)}}^{-\frac{1}{k(k+N)}} = \sum_{\ell=k+k_1}^{k+N-1} \int_{-\frac{1}{k\ell}}^{-\frac{1}{k(\ell+1)}}.
\]
This gives
\begin{equation}\label{E:split2}
	\sum_{\substack{0\le h<k\le N\\\gcd(h,k)=1\\\gcd(k,6)=d}} \int_{-\frac{1}{k(k+k_1)}}^{-\frac{1}{k(k+N)}} = \sum_{\substack{0\le h<k\le N\\\gcd(h,k)=1\\\gcd(k,6)=d}} \sum_{\ell=k+k_1}^{k+N-1} \int_{-\frac{1}{k\ell}}^{-\frac{1}{k(\ell+1)}} = \sum_{\substack{1\le k\le N\\\gcd(k,6)=d}} \sum_{\ell=N+1}^{k+N-1} \sum_{\substack{0\le h<k\\\gcd(h,k)=1\\N<k+k_1\le\ell}} \int_{-\frac{1}{k\ell}}^{-\frac{1}{k(\ell+1)}}.
\end{equation}

We have a similar splitting for $ \int_\frac{1}{k(k+N)}^\frac{1}{k(k+k_2)}$. Note that this splitting was not necessary in \cite{BM} as only rougher bounds for the Kloostermann sums were required for the asymptotic expansion.

\subsection{$6\mid k$}

We have,\footnote{Note that in \cite{BM} $S_{62}$ is called $S_{63}$.} by (4.2) of \cite{BM},
\[
	\ssum_6 = S_{61} + S_{62},
\]
where
\begin{align*}
	S_{61} &:= \sum_{\substack{0\le h<k\le N\\\gcd(h,k)=1\\6\mid k}} \frac{\w_{h,k}\w_{h,\frac k2}\w_{h,\frac k3}}{\w_{h,\frac k6}}(-1)^{\frac k2+1}e^{\frac{\pi i}{2}\left(1-\frac{3k}{2}\right)h'-\frac{2\pi inh}{k}}\int_{-\vth_{h,k}'}^{\vth_{h,k}''} e^\frac{2\pi nz}{k}g_1(q_1) d\Phi,\\
	S_{62} &:= \frac12\sum_{\substack{0\le h<k\le N\\\gcd(h,k)=1\\6\mid k}} \frac{\w_{h,k}\w_{h,\frac k2}\w_{h,\frac k3}}{k\w_{h,\frac k6}}e^{-\frac{2\pi inh}{k}}\sum_{\nu\Pmod k} (-1)^\nu e^{\frac{\pi i}{k}\left(-3\nu^2+\nu\right)h'}\\
	&\hspace{8cm}\times \int_{-\vth_{h,k}'}^{\vth_{h,k}''} e^\frac{2\pi nz}{k}e^{-\frac{\pi}{12kz}}\xi(q_1)I_{k,\nu}(z) d\Phi.
\end{align*}

\subsubsection{$S_{61}$}

We first investigate $S_{61}$, In order to use bounds for Kloosterman sums, we employ the splitting \eqref{E:spit}. We denote the corresponding terms $S_{61}^{[1]}$, $S_{61}^{[2]}$, and $S_{61}^{[3]}$, respectively.\\
4.1.1.1. $S_{61}^{[1]}$. We have
\[
	S_{61}^{[1]} = \sum_{\substack{1\le k\le N\\6\mid k}} (-1)^{\frac k2+1}\sum_{m\ge0} a(m)K_k^{[1]}(n,m)\int_{-\frac{1}{k(k+N)}}^\frac{1}{k(k+N)} e^{\frac{2\pi}{k}\left(nz-\frac mz\right)} d\Phi.
\]
Using \Cref{L:Kkmn} and the facts that $\re(z)=\frac{k}{N^2},\re(\frac1z)\ge\frac k2$, we have $S_{61}^{[1]}\to0$ as $N\to\infty$.\\
4.1.1.2. $S_{61}^{[2]}$ and $S_{61}^{[3]}$. The contributions $S_{61}^{[2]}$ and $S_{61}^{[3]}$ are treated in exactly the same way, thus we only need to consider $S_{61}^{[2]}$. Using \eqref{E:split2}, we obtain
\[
	S_{61}^{[2]} = \sum_{\substack{1\le k\le N\\6\mid k}} (-1)^{\frac k2+1}\sum_{m\ge0} a(m)\sum_{\ell=N+1}^{k+N-1} \K_{k,\ell}^{[1]}(n,m)\int_{-\frac{1}{k\ell}}^{-\frac{1}{k(\ell+1)}} e^{\frac{2\pi}{k}\left(nz-\frac mz\right)} d\Phi.
\]
Bounding this as before, we obtain that $S_{61}^{[2]}\to0$ as $N\to\infty$.

\subsubsection{$S_{62}$}

We use again the splitting \eqref{E:spit}. We denote the corresponding terms $S_{62}^{[1]}$, $S_{62}^{[2]}$, and $S_{62}^{[3]}$, respectively.\\
4.1.2.1. $S_{62}^{[1]}$. We have
\[
	S_{62}^{[1]} = \frac12\sum_{\substack{1\le k\le N\\6\mid k}} \frac1k\sum_{m\ge0} r(m)\sum_{\nu\Pmod k} (-1)^\nu K_k^{[2]}(\nu;n,m)\int_{-\frac{1}{k(k+N)}}^\frac{1}{k(k+N)} e^{\frac{2\pi}{k}\left(nz-\frac mz\right)}e^{-\frac{\pi}{12kz}}zI_{k,\nu}(z) d\Phi.
\]

Using \Cref{L:Kkmn} and \Cref{L:Jb} \ref{I:Jb1}, we have, as $N\to\infty$,
\begin{align*}
	S_{62}^{[1]} &\ll \sum_{\substack{1\le k\le N\\6\mid k}} \frac1ke^{\frac{2\pi n}{k}\frac{k}{N^2}}\sum_{m\ge0} |r(m)|e^{-\pi m}\sum_{\nu=1}^k n^\frac13k^{\frac23+\e}\int_{-\frac{1}{k(k+N)}}^\frac{1}{k(k+N)} \left|\CJ_{-\frac{1}{12},k,\nu}(z)\right| d\Phi\\
	&\ll n^\frac13\sum_{k=1}^N k^{-\frac13+\e}\sum_{\nu=1}^k \frac{1}{\left|\frac\pi2-\frac\pi k\left(\nu-\frac16\right)\right|}\frac{1}{k(k+N)} \ll \frac{n^\frac13}{N}\sum_{k=1}^N k^{-\frac13+\e}\log(k)\\
	&\ll n^\frac13N^{-\frac13+\e}\log(N) \to 0.
\end{align*}
4.1.2.2. $S_{62}^{[2]}$ and $S_{62}^{[3]}$. The contributions $S_{62}^{[2]}$ and $S_{62}^{[3]}$ are treated in exactly the same way, thus we only consider $S_{62}^{[2]}$. We use \eqref{E:split2} and obtain
\begin{multline*}
	S_{62}^{[2]} = \frac12\sum_{\substack{1\le k\le N\\6\mid k}} \frac1k\sum_{\nu\Pmod k} (-1)^\nu\sum_{m\ge0} r(m)\sum_{\ell=N+1}^{k+N-1} \K_{k,\ell}^{[2]}(\nu;n,m)\\
	\times \int_{-\frac{1}{k\ell}}^{-\frac{1}{k(\ell+1)}} e^{\frac{2\pi}{k}\left(nz-\frac mz\right)}\CJ_{-\frac{1}{12},k,\nu}(z)d\Phi.
\end{multline*}
As before, this vanishes as $N\to\infty$. Combining yields that $\sum_6\to0$ as $N\to\infty$.

\subsection{$\gcd(k,6)=2$}

By Subsection 4.3 of \cite{BM}, we have
\[
	\ssum_2 = S_{21} + S_{22},
\]
where 
\begin{align*}
	S_{21} &:= \frac14\sum_{\substack{0\le h<k\le N\\\gcd(h,k)=1\\\gcd(k,6)=2\\3\mid h'}} \frac{\w_{h,k}\w_{h,\frac k2}\w_{3h,k}}{\w_{3h,\frac k2}}(-1)^{\frac k2+1}e^{\frac{\pi i}{2}\left(1-\frac{3k}{2}\right)h'-\frac{2\pi inh}{k}}\\
	&\hspace{7cm}\times \int_{-\vth_{h,k}'}^{\vth_{h,k}''} e^{\frac{2\pi nz}{k}+\frac{2\pi}{9kz}}\frac{P\left(q_1^2\right)P\left(q_1^\frac13\right)}{P\left(q_1^\frac23\right)}f(q_1) d\Phi,\\
	S_{22} &:= \frac12\sum_{\substack{0\le h<k\le N\\\gcd(h,k)=1\\\gcd(k,6)=2\\3\mid h'}} \frac1k\frac{\w_{h,k}\w_{h,\frac k2}\w_{3h,k}}{\w_{3h,\frac k2}}e^{-\frac{2\pi inh}{k}}\sum_{\nu\Pmod k} (-1)^\nu e^{\frac{\pi ih'}{k}\left(-3\nu^2+\nu\right)}\\
	&\hspace{6.5cm}\times \int_{-\vth_{h,k}'}^{\vth_{h,k}''} ze^\frac{2\pi nz}{k}e^\frac{5\pi}{36kz}\frac{P\left(q_1^2\right)P\left(q_1^\frac13\right)}{P\left(q_1^\frac23\right)} I_{k,\nu}(z) d\Phi.
\end{align*}

\subsubsection{$S_{21}$}\label{SSS:S21}

In this case we have a principal part. The non-principal part is bounded exactly as for $S_{61}$ and vanishes as $N\to\infty$. We are left with
\[
	\CS_{21} := \frac14\sum_{\substack{0\le h<k\le N\\\gcd(h,k)=1\\\gcd(k,6)=2\\3\mid h'}} (-1)^{\frac k2+1}\frac{\w_{h,k}\w_{h,\frac k2}\w_{3h,k}}{\w_{3h,\frac k2}}e^{\frac{\pi i}{2}\left(1-\frac{3k}{2}\right)h'-\frac{2\pi inh}{k}}\int_{-\vth_{h,k}'}^{\vth_{h,k}''} e^{\frac{2\pi nz}{k}+\frac{2\pi}{9kz}} d\Phi.
\]
As shown in \cite{BM}, this exactly equals the negative of the modular piece and cancels the contribution of $g_2$ in that cusp. As alluded to above these two terms cancel.

\subsubsection{$S_{22}$}

We again have a principal part. The non-principal part is bounded exactly as for $S_{62}$ and vanishes as $N\to\infty$. We are left with
\begin{equation*}
	\CS_{22} := \frac12\sum_{\substack{0\le h<k\le N\\\gcd(h,k)=1\\\gcd(k,6)=2\\3\mid h'}} \frac1k\frac{\w_{h,k}\w_{h,\frac k2}\w_{3h,k}}{\w_{3h,\frac k2}}e^{-\frac{2\pi inh}{k}}\sum_{\nu\Pmod k} (-1)^\nu e^{\frac{\pi i}{k}\left(-3\nu^2+\nu\right)h'}\int_{-\vth_{h,k}'}^{\vth_{h,k}''}e^\frac{2\pi nz}{k}\CJ_{\frac{5}{36},k,\nu}(z)d\Phi.
\end{equation*}

We now use \Cref{L:Jb} \ref{I:Jb2}. The contribution from $\CJ_{\frac{5}{36},k,\nu}$ is bounded as for $S_{62}$ and vanishes as $N\to\infty$. Now for $\CJ_{\frac{5}{36},k,\nu}^*$, we again use \eqref{E:spit}. We denote the corresponding contribution by $\CS_{22}^{[1]}$, $\CS_{22}^{[2]}$, and $\CS_{22}^{[3]}$, respectively.\\
4.2.2.1. $S_{22}^{[1]}$. We have
\begin{multline*}
	\CS_{22}^{[1]} = \frac{\sqrt5\pi}{6\sqrt3}\sum_{\substack{1\le k\le N\\\gcd(k,6)=2}} \frac1k\sum_{\nu\Pmod k} (-1)^\nu K_k^{[4]}(\nu;n)\int_{-1}^1 \frac{L_k\left(n,\frac{5}{72}\left(1-x^2\right)\right)}{\cosh\left(\frac{\pi i}{k}\left(\nu-\frac16\right)-\frac{\pi\sqrt5x}{6\sqrt3k}\right)} dx\\
	+ \frac{\sqrt5i}{12\sqrt3}\sum_{\substack{1\le k\le N\\\gcd(k,6)=2}} \frac1k\sum_{\nu\Pmod k} (-1)^\nu K_k^{[4]}(\nu;n)\int_{-1}^1 \frac{1}{\cosh\left(\frac{\pi i}{k}\left(\nu-\frac16\right)-\frac{\pi\sqrt5x}{6\sqrt3k}\right)}\\
	\times \left(\CE_k^{[1]}\left(n,\frac{5}{72}\left(1-x^2\right)\right) + \CE_k^{[2]}\left(n,\frac{5}{72}\left(1-x^2\right)\right) + \CE_k^{[3]}\left(n,\frac{5}{72}\left(1-x^2\right)\right)\right) dx,
\end{multline*}
where ($w=\frac zk$), and
\begin{align*}
	L_k(n,y) &:= \frac{1}{2\pi i}\int_R e^{2\pi nw+\frac{2\pi y}{k^2w}} dw,\qquad \CE_k^{[1]}(n,y) := \int_{\frac{1}{N^2}+\frac{i}{k(k+N)}}^{-\frac{1}{N^2}+\frac{i}{k(k+N)}} e^{2\pi nw+\frac{2\pi y}{k^2w}} dw,\\
	\CE_k^{[2]}(n,y) &:= \int_{-\frac{1}{N^2}+\frac{i}{k(k+N)}}^{-\frac{1}{N^2}-\frac{i}{k(k+N)}} e^{2\pi nw+\frac{2\pi y}{k^2w}} dw,\qquad \CE_k^{[3]}(n,y) := \int_{-\frac{1}{N^2}-\frac{i}{k(k+N)}}^{\frac{1}{N^2}-\frac{i}{k(k+N)}} e^{2\pi nw+\frac{2\pi y}{k^2w}} dw,
\end{align*}
with $R$ the rectangle with edges $\pm\frac{1}{N^2}\pm\frac{i}{k(k+N)}$ surrounding $0$ counterclockwise. We first bound $\CE_k^{[1]}$ and $\CE_k^{[3]}$. On these ranges of integration, we have that (see \cite{Rad})
\[
	w = u\pm\frac{i}{k(k+N)},\qquad -\frac{1}{N^2} \le u \le \frac{1}{N^2},\qquad \re(w) = u \le \frac{1}{N^2},\qquad \re\left(\frac1w\right) \le 4k^2.
\]
Thus
\[
	\left|\CE_k^{[1]}\left(n,\frac{5}{72}\left(1-x^2\right)\right)\right|,\ \left|\CE_k^{[3]}\left(n,\frac{5}{72}\left(1-x^2\right)\right)\right| \le \frac{2}{N^2}e^{\frac{5\pi}{9}\left(1-x^2\right)+\frac{2\pi n}{N^2}}.
\]

For $\CE_k^{[2]}$ we have, again from \cite{Rad},
\[
	w = -\frac{1}{N^2} + iv,\qquad -\frac{1}{k(k+N)} \le v \le \frac{1}{k(k+N)},\qquad \re(w),\ \re\left(\frac1w\right) < 0.
\]
Thus
\begin{equation*}
	\CE_k^{[2]}\left(n,\frac{5}{144}\left(1-x^2\right)\right)<\frac{2}{kN}.
\end{equation*}
Thus $\CE_k^{[1]}$, $\CE_k^{[2]}$, and $\CE_k^{[3]}$ contribute, using \Cref{L:Kkl34},
\[
	\ll e^\frac{2\pi n}{N^2}\sum_{k=1}^N \frac1k\sum_{\nu=1}^k k^{\frac23+\e}n^\frac13\int_{-1}^1 \frac{1}{\left|\cosh\left(\frac{\pi i}{k}\left(\nu-\frac16\right)-\frac{\pi\sqrt5x}{6\sqrt3k}\right)\right|}\frac{1}{kN} e^{\frac{5\pi}{9}\left(1-x^2\right)} dx.
\]
We have for $\a\ge0$ and $0<\b<\pi$
\begin{equation}\label{E:cosbound}
	|\cosh(\a+i\b)| \ge \left|\sin\left(\frac\pi2-\b\right)\right| \gg \left|\frac\pi2-\b\right|.
\end{equation}
Note that $\nu-\frac16\ge0$. Thus the above is
\begin{align*}
	&\ll \frac{n^\frac13}{N}\sum_{k=1}^N k^{-\frac43+\e}\sum_{\nu=1}^k \frac{1}{\left|\frac\pi2-\frac\pi k\left(\nu-\frac16\right)\right|}\int_{-1}^1 e^{\frac{5\pi}{9}\left(1-x^2\right)} dx \ll \frac{n^\frac13}{N}\sum_{k=1}^N k^{-\frac43+\e}k\log(k)\\
	&\ll \frac{n^\frac13}{N}N^{\frac23+\e}\log(N) = n^\frac13N^{-\frac13+\e}\log(N) \to 0
\end{align*}
as $N\to\infty$. Thus the contributions of $\CE_k^{[1]}$, $\CE_k^{[2]}$, and $\CE_k^{[3]}$ vanish as $N\to\infty$.

Next, using the representation
\[
	I_\ell(x) := \sum_{m\ge0} \frac{1}{m!\Ga(m+\ell+1)}\left(\frac x2\right)^{2m+\ell},
\]
we evaluate
\begin{equation*}
	L_k(n,y) = \frac1k\sqrt\frac ynI_1\left(\frac{4\pi\sqrt{ny}}{k}\right),
\end{equation*}
plugging in the series for the exponential function. Thus we obtain overall, letting $N\to\infty$,
\[
	\CS_{22}^{[1]} = \frac{5\pi}{36\sqrt{6n}}\sum_{\substack{k\ge1\\\gcd(k,6)=2}} \frac{1}{k^2}\sum_{\nu\Pmod k} (-1)^\nu K_k^{[4]}(\nu;n)\int_{-1}^1 \frac{\sqrt{1-x^2}I_1\left(\frac{\pi\sqrt{10n\left(1-x^2\right)}}{3k}\right)}{\cosh\left(\frac{\pi i}{k}\left(\nu-\frac16\right)-\frac{\pi\sqrt5x}{6\sqrt3k}\right)} dx.
\]
This equals the first term in \Cref{T:p2}.\\
4.2.2.2. $S_{22}^{[2]}$ and $S_{22}^{[3]}$. We next turn to $S_{22}^{[2]}$; $S_{22}^{[3]}$ is bounded in exactly the same way. We have 
\begin{multline*}
	S_{22}^{[2]} = \frac12\sum_{\substack{1\le k\le N\\\gcd(k,6)=2}} \frac1k\sum_{\nu\Pmod k} (-1)^\nu\sum_{\ell=N+1}^{k+N-1} \K_{k,\ell}^{[4]}(\nu;n,0)\sqrt\frac{5}{36\cdot3}\\
	\times \int_{-1}^1 \frac{1}{\cosh\left(\frac{\pi i}{k}\left(\nu-\frac16\right)-\frac{\pi\sqrt\frac56x}{\sqrt3k}\right)}\int_{-\frac{1}{k\ell}}^{-\frac{1}{k(\ell+1)}} e^{2\pi nw+\frac{5\pi}{36k^2w}\left(1-x^2\right)} d\Phi dx.
\end{multline*}
We bound, following \cite{Rad},
\[
	\re\left(2\pi nw+\frac{5\pi}{36k^2w}\left(1-x^2\right)\right) \le \frac{2\pi n}{N^2} + \frac{5\pi}{9}\left(1-x^2\right).
\]
Thus, using \Cref{L:Kkl34} and \eqref{E:cosbound}, we obtain, for $N\to\infty$,
\begin{align*}
	S_{22}^{[2]} &\ll e^\frac{2\pi n}{N^2}\sum_{k=1}^N \frac1k\sum_{\nu=1}^k \sum_{\ell=N+1}^{k+N-1} k^{\frac23+\e}n^\frac13\int_{-1}^1 \frac{e^{\frac{5\pi}{9}\left(1-x^2\right)}}{\left|\cosh\left(\frac{\pi i}{k}\left(\nu-\frac16\right)-\frac{\pi\sqrt5x}{6\sqrt3k}\right)\right|} dx \int_{-\frac{1}{k\ell}}^{-\frac{1}{k(\ell+1)}} d\Phi\\
	&\ll n^\frac13\sum_{k=1}^N k^{-\frac13+\e}k\log(k)\frac{1}{k(N+k)} \ll \frac{n^\frac13}{N}\sum_{k=1}^N k^{-\frac13+\e}\log(k) \ll n^\frac13N^{-\frac13+\e}\log(N) \to 0.
\end{align*}

\subsection{$\gcd(k,6)=3$}

From Subsection 4.4 of \cite{BM}, we have
\[
	\ssum_3 = S_{31} + S_{32},
\]
where
\begin{align*}
	S_{31} &:= \frac12\sum_{\substack{0\le h<k\le N\\\gcd(h,k)=1\\\gcd(k,6)=3\\8\mid h'}} \frac{\w_{h,k}\w_{2h,k}\w_{h,\frac k3}}{\w_{2h,\frac k3}}(-1)^{\frac12(k-1)}e^{\frac{3\pi ih'}{4k}-\frac{2\pi inh}{k}}\\
	&\hspace{6.5cm}\times \int_{-\vth_{h,k}'}^{\vth_{h,k}''} e^{\frac{2\pi nz}{k}-\frac{\pi}{2kz}} \frac{P\left(q_1^\frac12\right)P\left(q_1^3\right)\w\left(q_1^\frac12\right)}{P\left(q_1^\frac32\right)} d\Phi,\\
	S_{32} &:= \frac12\sum_{\substack{0\le h<k\le N\\\gcd(h,k)=1\\\gcd(k,6)=3\\8\mid h'}} \frac{\w_{h,k}\w_{2h,k}\w_{h,\frac k3}}{k\w_{2h,\frac k3}}e^{-\frac{2\pi inh}{k}}\sum_{\nu\Pmod k} (-1)^\nu e^{\frac{\pi i}{k}\left(-3\nu^2+\nu\right)h'}\\
	&\hspace{6.5cm}\times \int_{-\vth_{h,k}'}^{\vth_{h,k}''} ze^{\frac{2\pi nz}{k}+\frac{\pi}{6kz}}\frac{P\left(q_1^\frac12\right)P\left(q_1^3\right)}{P\left(q_1^\frac32\right)}I_{k,\nu}(z) d\Phi,
\end{align*}
with the third order mock theta function
\[
	\w(q) := \sum_{n=0}^\infty \frac{q^{2n(n+1)}}{\left(q;q^2\right)_{n+1}^2}.
\]

\subsubsection{$S_{31}$}

In this case, we have no principal part. As for $S_{61}$ this contribution vanishes as $N\to\infty$.

\subsubsection{$S_{32}$}

Here, we have a principal part. The non-principal part is bounded exactly as for $S_{62}$ and vanishes as $N\to\infty$. We are left with
\begin{multline*}
	\CS_{32} := \frac12\sum_{\substack{0\le h<k\le N\\\gcd(h,k)=1\\\gcd(k,6)=3\\8\mid h'}} \frac1k\frac{\w_{h,k}\w_{2h,k}\w_{h,\frac k3}}{\w_{2h,\frac k3}}e^{-\frac{2\pi inh}{k}}\sum_{\nu\Pmod k} (-1)^\nu e^{\frac{\pi i}{k}\left(-3\nu^2+\nu\right)h'}\\
	\times \int_{-\vth_{h,k}'}^{\vth_{h,k}''} e^{\frac{2\pi nz}{k}}\CJ_{\frac16,k,\nu}(z) d\Phi.
\end{multline*}
Again we may change $\CJ_{\frac16,k,\nu}(z)$ into $\CJ_{\frac16,k,\nu}^*(z)$, using \Cref{L:Jb}. The error introduced vanishes for $N\to\infty$ as before.

Now for $\CJ_{\frac16,k,\nu}^*$, we again use \eqref{E:spit}. We denote the corresponding contributions by $\CS_{32}^{[1]}$, $\CS_{32}^{[2]}$, and $\CS_{32}^{[3]}$, respectively. First we have
\begin{multline*}
	\CS_{32}^{[1]} = \frac{\pi}{3\sqrt2}\sum_{\substack{1\le k\le N\\\gcd(k,6)=3}} \frac1k\sum_{\nu\Pmod k} (-1)^\nu K_k^{[6]}(\nu;n)\int_{-1}^1 \frac{L_k\left(n,\frac{1}{12}\left(1-x^2\right)\right)}{\cosh\left(\frac{\pi i}{k}\left(\nu-\frac16\right)-\frac{\pi x}{3\sqrt2k}\right)} dx\\
	+ \frac{i}{6\sqrt2}\sum_{\substack{1\le k\le N\\\gcd(k,6)=3}} \frac1k\sum_{\nu\Pmod k} (-1)^\nu K_k^{[6]}(\nu;n)\int_{-1}^{1}\frac{1}{\cosh\left(\frac{\pi i}{k}\left(\nu-\frac16\right)-\frac{\pi x}{3\sqrt2k}\right)}\\
	\times \left(\CE_k^{[1]}\left(n,\frac{1}{12}\left(1-x^2\right)\right) + \CE_k^{[2]}\left(n,\frac{1}{12}\left(1-x^2\right)\right) + \CE_k^{[3]}\left(n,\frac{1}{12}\left(1-x^2\right)\right)\right) dx.
\end{multline*}

As before, we show that the contributions from $\CE_k^{[1]}$, $\CE_k^{[2]}$, and $\CE_k^{[3]}$ vanish. Overall, we obtain, letting $N\to\infty$,
\[
	\CS_{32}^{[1]} = \frac{\pi}{6\sqrt{6n}}\sum_{\substack{k\ge1\\\gcd(k,6)=3}} \frac{1}{k^2}\sum_{\nu\Pmod k} (-1)^\nu K_k^{[6]}(\nu;n)\int_{-1}^1 \frac{\sqrt{1-x^2} I_1\left(\frac{2\pi\sqrt{n\left(1-x^2\right)}}{\sqrt3k}\right)}{\cosh\left(\frac{\pi i}{k}\left(\nu-\frac16\right)-\frac{\pi x}{3\sqrt2k}\right)} dx.
\]
This matches the second term in \Cref{T:p2}. Also as before, we show that $S_{32}^{[2]}$ and $S_{32}^{[3]}$ vanish, as $N\to\infty$.

\subsection{$\gcd(k,6)=1$}

By Subsection 4.5 of \cite{BM},
\[
	\ssum_1 = S_{11} + S_{12},
\]
where
\begin{align*}
	S_{11} &:= \frac12\sum_{\substack{0\le h<k\le N\\\gcd(h,k)=1\\\gcd(k,6)=1\\24\mid h'}} \frac{\w_{h,k}\w_{2h,k}\w_{3h,k}}{\w_{6h,k}}(-1)^{\frac12(k-1)}e^{\frac{3\pi ih'}{4k}-\frac{2\pi inh}{k}}\\
	&\hspace{6cm}\times \int_{-\vth_{h,k}'}^{\vth_{h,k}''} e^{\frac{2\pi nz}{k}-\frac{11\pi}{18kz}} \frac{P\left(q_1^\frac12\right)P\left(q_1^\frac13\right)\w\left(q_1^\frac12\right)}{P\left(q_1^\frac16\right)} d\Phi,\\
	S_{12} &:= \frac12\sum_{\substack{0\le h<k\le N\\\gcd(h,k)=1\\\gcd(k,6)=1\\24\mid h'}} \frac{\w_{h,k}\w_{2h,k}\w_{3h,k}}{k\w_{6h,k}}e^{-\frac{2\pi inh}{k}}\sum_{\nu\Pmod k} (-1)^\nu e^{\frac{\pi i}{k}\left(-3\nu^2+\nu\right)h'}\\
	&\hspace{6cm}\times \int_{-\vth_{h,k}'}^{\vth_{h,k}''} ze^{\frac{2\pi nz}{k}+\frac{\pi}{18kz}}\frac{P\left(q_1^\frac12\right)P\left(q_1^\frac13\right)}{P\left(q_1^\frac16\right)} I_{k,\nu}(z) d\Phi.
\end{align*}

\subsubsection{$S_{11}$}

Again, we have no principal part. As for $S_{61}$ this contribution vanishes as $N\to\infty$.

\subsubsection{$S_{12}$}

In this case, we have a principal part. The non-principal part is bounded exactly as for $S_{62}$ and vanishes, as $N\to\infty$. We are left with
\begin{multline*}
	\CS_{12} := \frac12\sum_{\substack{0\le h<k\le N\\\gcd(k,6)=1\\24\mid h'}} \frac1k\frac{\w_{h,k}\w_{2h,k}\w_{3h,k}}{\w_{6h,k}}e^{-\frac{2\pi inh}{k}}\sum_{\nu\Pmod k} (-1)^\nu e^{\frac{\pi i}{k}\left(-3\nu^2+\nu\right)h'}\\
\times \int_{-\vth_{h,k}'}^{\vth_{h,k}''} e^\frac{2\pi nz}{k}\CJ_{\frac{1}{18},k,\nu}(z) d\Phi.
\end{multline*}

Again we may change $\CJ_{\frac{1}{18},k,\nu}$ into $\CJ_{\frac{1}{18},k,\nu}^*(z)$, using \Cref{L:Jb}. The error introduced vanishes for $N\to\infty$. For $\CJ_{\frac{1}{18},k,\nu}^*$, we again employ \eqref{E:spit}. We denote the corresponding contributions by $\CS_{12}^{[1]}$, $\CS_{12}^{[2]}$, and $\CS_{12}^{[3]}$, respectively. First, we have 
\begin{multline*}
	\CS_{12}^{[1]} = \frac{\pi}{3\sqrt6}\sum_{\substack{1\le k\le N\\\gcd(k,6)=1}} \frac1k\sum_{\nu\Pmod k} (-1)^\nu K_k^{[8]}(\nu;n)\int_{-1}^1 \frac{L_k\left(n,\frac{1}{36}\left(1-x^2\right)\right)}{\cosh\left(\frac{\pi i}{k}\left(\nu-\frac16\right)-\frac{\pi x}{3\sqrt6k}\right)} dx\\
	+ \frac{i}{6\sqrt6}\sum_{\substack{1\le k\le N\\\gcd(k,6)=1}} \frac1k\sum_{\nu\Pmod k} (-1)^\nu K_k^{[8]}(\nu;n)\int_{-1}^1 \frac{1}{\cosh\left(\frac{\pi i}{k}\left(\nu-\frac16\right)-\frac{\pi x}{3\sqrt6k}\right)}\\
	\times \left(\CE_k^{[1]}\left(n,\frac{1}{36}\left(1-x^2\right)\right) + \CE_k^{[2]}\left(n,\frac{1}{36}\left(1-x^2\right)\right) + \CE_k^{[3]}\left(n,\frac{1}{36}\left(1-x^2\right)\right)\right) dx.
\end{multline*}
As before, we show that the contributions from $\CE_k^{[1]}$, $\CE_k^{[2]}$, and $\CE_k^{[3]}$ vanish. Thus overall
\[
	\CS_{12}^{[1]} = \frac{\pi}{18\sqrt{6n}}\sum_{\substack{k\ge1\\\gcd(k,6)=1}} \frac{1}{k^2}\sum_{\nu\Pmod k} (-1)^\nu K_k^{[8]}(\nu;n)\int_{-1}^1 \frac{\sqrt{1-x^2}I_1\left(\frac{2\pi\sqrt{n\left(1-x^2\right)}}{3k}\right)}{\cosh\left(\frac{\pi i}{k}\left(\nu-\frac16\right)-\frac{\pi x}{3\sqrt6k}\right)} dx.
\]
This matches the third term.

Again the contribution from $S_{12}^{[2]}$ and $S_{12}^{[3]}$ vanish. This completes the proof of Theorem \ref{T:p2}.


\begin{thebibliography}{99}
	\bibitem{A66} G. Andrews, {\it On the theorems of Watson and Dragonette for Ramanujan's mock theta functions}, Amer. J. Math. {\bf88} (1966), 454--490.
	
	\bibitem{A80} G. Andrews, The theory of partitions, Cambridge University Press, 1980.
	
	\bibitem{A05} G. Andrews, {\it Partitions with short sequences and mock theta functions}, Proc. Nat. Acad. Sci. {\bf102} (2005), 4666--4671.
	
	\bibitem{AEPR} G. Andrews, H. Eriksson, F. Petrov, and D. Romik, {\it Integrals, partitions and MacMahon's theorem}, J. Comb. Theory (A) {\bf114} (2007), 545--554.
	
	\bibitem{BLM} K. Bringmann, J. Lovejoy, and K. Mahlburg, {\it On $q$-difference equations for partitions without $k$-sequences,} The legacy of Ramanujan 20, Ramanujan Mathematical Society (2012), 129--137.
	
	\bibitem{BM} K. Bringmann and K. Mahlburg, {\it An extension of the Hardy--Ramanujan Circle Method and applications to partitions without sequences}, American Journal of Mathematics {\bf133} (2011), 1151--1178.
	
	\bibitem{BMa} K. Bringmann and J. Manschot, {\it From sheaves on $P^2$ to generalizations of the Rademacher expansion}, American Journal of Mathematics {\bf135} (2013), 1039--1065.
	
	\bibitem{BO} K. Bringmann and K. Ono, {\it The $f(q)$ mock theta function conjecture and partition ranks}, Invent. Math. {\bf165} (2006), 243--266.
	
	\bibitem{DP} S. DeSalvo and I. Pak, {\it Log-concavity of the partitions function,} Ramanujan J. {\bf 38} (2015), 61--73.
	
	\bibitem{D} L. Dragonette, {\it Some asymptotic formulae for the mock theta series of Ramanujan}, Trans. Amer. Math. Soc. {\bf72} (1952), 474--500.
	
	\bibitem{HR} G. Hardy and S. Ramanujan, {\it Asmptotic formulae for the distribution of integers of various types}, Proc. London Math. Soc. (2), {\bf16} (1918), 112--132.
	
	\bibitem{HLR} A. Holroyd, T. Liggett, and D. Romik, {\it Integrals, partitions, and cellular automata}, Trans. Amer. Math. Soc. {\bf356} (2004), 3349--3368.
	
	\bibitem{Mac} P. MacMahon, {\it Combinatory Analysis}, Cambridge Univ. Press (1916), Volume II.
	
	
	
	\bibitem{Rad} H. Rademacher, {\it The Fourier coefficients of the modular invariant $J(\t)$}, Am. Journal of Math {\bf60} (1938), 501--512.
	
	\bibitem{RadZ} H. Rademacher and H. Zuckerman, {\it On the Fourier coefficients of certain modular forms of positive dimension}, Ann. of Math. (2) {\bf39} (1938), 433--462.
	
	\bibitem{Ram} S. Ramanujan, {\it The lost notebook and other unpublished papers}, Narose, New Delhi, 1988.
	
	\bibitem{Rhoades} R. Rhoades, {\it Asymptotics for the number of strongly unimodal sequences}, Int. Math. Res. Not. IMRN {\bf 3} (2014), 700--719.
	
	
	\bibitem{Z} S. Zwegers, {\it Mock theta-functions and real analytic modular forms}, $q$-series with applications to combinatorics, number theory, and physics (Urbana, IL, 2000), 269--277, Contemp. Math. {\bf291}, Amer. Math. Soc., Providence, RI, 2001.
\end{thebibliography}
\end{document}